\theoremstyle{plain}
\newtheorem{definition}{Definition}
\newtheorem{remark}{Remark}
\newtheorem{theorem}{Theorem}
\numberwithin{equation}{section}
\begin{document}
\title[Sobolev inequalities and interpolation spaces]{Sobolev inequalities, rearrangements, isoperimetry and interpolation spaces}
\author{Joaquim Mart\'{\i}n$^{\ast}$}
\address{Department of Mathematics\\
Universitat Aut\`{o}noma de Barcelona}
\email{jmartin@mat.uab.cat}
\author{Mario Milman}
\address{Department of Mathematics\\
Florida Atlantic University\\
Boca Raton, Fl. 33431}
\email{extrapol@bellsouth.net}
\urladdr{http://www.math.fau.edu/milman}
\thanks{$^{\ast}$Partially supported in part by Grants MTM2007-60500, MTM2008-05561-C02-02.}
\thanks{This paper is in final form and no version of it will be submitted for
publication elsewhere.}
\thanks{2000 Mathematics Subject Classification Primary: 46E30, 26D10.}
\keywords{Sobolev inequalities, Poincar\'{e}, symmetrization, isoperimetric
inequalities, interpolation.}
\dedicatory{Dedicated to our friends Bj\"{o}rn Jawerth and Evgeniy Pustylnik on the
ocassion of their 130th birthday (57th and 73th birthdays, respectively).}

\begin{abstract}
We characterize Poincar\'{e} inequalities in metric spaces using rearrangement inequalities.
\end{abstract}\maketitle

\section{Introduction}

Our starting point is the classical Gagliardo-Nirenberg inequality
which states that, for $n>1,$ $\frac{1}{n^{\prime}}=1-\frac{1}{n},$
\begin{equation}
\left\|  f\right\|  _{n^{\prime}}\leq\tau_{n}^{-1}\left\|  \left|
\nabla
f\right|  \right\|  _{L^{1}},\text{ }f\in Lip_{0}(\mathbb{R}^{n}), \label{a1}%
\end{equation}
where $Lip_{0}(\mathbb{R}^{n})$ denotes the set of Lipschitz
function on $\mathbb{R}^{n}$ with compact support,
$\tau_{n}=n\beta_{n}^{1/n}$ and $\beta_{n}=$ volume of the unit ball
in $\mathbb{R}^{n}$. It is well known (cf. \cite{maz} and
\cite{flem}), that (\ref{a1}) is equivalent to the isoperimetric
inequality\footnote{Here $m$ stands for Lebesgue measure and $m^{+}$
for Minkowski's content.}: for all Borel sets $A$ with
$m(A)<\infty,$
we have%
\begin{equation}
\tau_{n}\left(  m(A)\right)  ^{1/n^{\prime}}\leq m^{+}(A). \label{isop1}%
\end{equation}

We argue that it is worthwhile to consider a slightly more general
problem. Let $X=X(\mathbb{R}^{n})$ be a rearrangement invariant
space\footnote{i.e. such that if $f$ and $g$ have the same
distribution function then $\left\| f\right\|  _{X}=\left\|
g\right\|  _{X}$ (see Section \ref{sec24} below).}:
We ask for necessary and sufficient conditions such that%
\begin{equation}
\left\|  f\right\|  _{X}\leq c\left\|  \left|  \nabla f\right|
\right\|
_{L^{1}},\text{ }f\in Lip_{0}(\mathbb{R}^{n}), \label{a3}%
\end{equation}
holds. Maz'ya's classical method already shows that the problem has
a remarkably simple solution: (\ref{a3}) holds if and only if there
exists a
constant $c=c(n)>0$ such that for all Borel sets $A$ with $m(A)<\infty,$%
\begin{equation}
\phi_{X}(m(A))\leq cm^{+}(A), \label{a4}%
\end{equation}
where $\phi_{X}(t)$ is the fundamental function\footnote{It is well
known and easy to see that $\phi_{X}$ is continous, increasing and
equivalent to a
concave function.} of $X:$%
\[
\phi_{X}(t)=\left\|  \chi_{A}\right\|  _{X},\text{ with }m(A)=t.
\]

Formally (abusing the notation), the implication (\ref{a3})
$\Rightarrow $(\ref{a4}) follows inserting ``$f=\chi_{A}"$ in
(\ref{a3}) and then computing
$\left\|  \nabla f\right\|  _{L_{1}}=m^{+}(A),$ $\left\|  f\right\|  _{X}%
=\phi_{X}(m(A)).$

We now consider the converse statement. Here it will become clear
why we insist to work within the class of rearrangement invariant
spaces: Indeed, if we fix before hand a specific subclass of
rearrangement invariant spaces (e.g. Orlicz spaces) we would miss a
remarkable self-improving phenomenon.

Let $f\in Lip_{0}(\mathbb{R}^{n})$, and let $A_{t}=\{\left|
f\right|  >t\},$ $m(A_{t})=m_{f}(t)$ $(=$ the distribution function
of $f),$ then, from
(\ref{a4}), and the co-area formula, we find that%
\begin{equation*}\begin{split}
\int_{0}^{\infty}\phi_{X}(m_{f}(t))dt  &  \leq c\int_{0}^{\infty}m^{+}%
(A_{t})dt=c\int_{\mathbb{R}^{n}}\left|  \nabla\left|  f\right|
(x)\right|
dx\\
&  \leq c\int_{\mathbb{R}^{n}}\left|  \nabla f(x)\right|  dx.
\end{split}\end{equation*}
The integral on the left hand side is, by definition, the norm of
$f$ in the Lorentz space $\Lambda(X)$ associated with $X,$
\[
\left\|  f\right\|
_{\Lambda(X)}=\int_{0}^{\infty}\phi_{X}(m_{f}(t))dt.
\]
$\Lambda(X)$ is contained (and, in general, strictly contained) in
$X;$ in other words we have (cf. \cite{bs})
\begin{equation}
\left\|  f\right\|  _{X}\leq\left\|  f\right\|  _{\Lambda(X)}.
\label{necesitao}%
\end{equation}
Altogether, we have thus shown that
\[
\left\|  f\right\|  _{X}\leq\left\|  f\right\|  _{\Lambda(X)}\leq
c\left\| \nabla f\right\|  _{L^{1}}.
\]
Therefore, for $f\in Lip_{0}(\mathbb{R}^{n}),$ we have the
remarkable self
improvement%
\[
\left\|  f\right\|  _{X}\leq c\left\|  \left|  \nabla f\right|
\right\| _{L^{1}}\Leftrightarrow\left\|  f\right\|
_{\Lambda(X)}\leq c\left\|  \left| \nabla f\right|  \right\|
_{L^{1}}.
\]
But we are not quite done yet. We could have obtained the same
result starting from a much weaker inequality. Indeed, there is
another natural rearrangement invariant space (r.i. space)
associated to $X$: the somewhat larger Marcinkiewicz space $M(X)$
($=$Marcinkiewicz$=$weak type space) defined by the
quasi-norm%
\[
\left\|  f\right\|  _{M(X)}=\sup_{t>0}f^{\ast}(t)\phi_{X}(t)=\sup_{t>0}%
t\phi_{X}(m_{f}(t)),
\]
where $f^{\ast}$ is the non-increasing
rearrangement\footnote{$f^{\ast}$ is the generalized inverse of
$m_{f}.$} of $f.$ The fundamental functions of
these spaces satisfy%
\begin{equation}
\phi_{M(X)}(t)=\phi_{\Lambda(X)}(t)=\phi_{X}(t). \label{funda1}%
\end{equation}
It follows that for $f\in Lip_{0}(\mathbb{R}^{n}),$%
\[
\left\|  f\right\|  _{M(X)}\leq c\left\|  \left|  \nabla f\right|
\right\| _{L^{1}}\Leftrightarrow\left\|  f\right\|
_{\Lambda(X)}\leq c\left\|  \left| \nabla f\right|  \right\|
_{L^{1}}.
\]
These self-improving results are best possible since the spaces
$\Lambda(X),$ $M(X)$ are respectively the smallest and largest r.i.
spaces with fundamental functions equal to $\phi_{X}(t)$ (cf.
(\ref{funda1})), and such that (cf. \cite{bs})
\[
\Lambda(X)\subset X\subset M(X).
\]

A consequence of our discussion is that the optimal spaces $X$ for
the embedding (\ref{a3}) must be Lorentz spaces.

We now develop a quantitative connection with Euclidean
isoperimetry. For this purpose it is important to consider the
isoperimetric profile of $\mathbb{R}^{n}$
\[
I(t)=\inf_{m(A)=t}m^{+}(A).
\]
The isoperimetric inequality (\ref{isop1}) is the statement that for
$n>1,$
$I(t)$ is given by%

\[
I(t)=\tau_{n}t^{1/n^{\prime}},\text{ \ \ \ \ }n^{\prime}=n/(n-1).
\]
Note that $\Lambda(L^{n^{\prime}})=L(n^{\prime},1):$%
\begin{align*}
\left\|  f\right\|  _{\Lambda(L^{n^{\prime}})}  &
=\int_{0}^{\infty}\left( m_{f}(t)\right)
^{1/n^{\prime}}dt=\int_{0}^{\infty}t^{1/n^{\prime}}df^{\ast
}(t)\\
&
=\frac{1}{n^{\prime}}\int_{0}^{\infty}t^{1/n^{\prime}}f^{\ast}(t)\frac
{dt}{t}=\frac{1}{n^{\prime}}\left\|  f\right\|  _{L(n^{\prime},1)}.
\end{align*}
Therefore the previous analysis shows that the Gagliardo-Nirenberg
inequality
(\ref{a1}) self improves to its sharper form%
\begin{equation}
\left\|  f\right\|  _{L(n^{\prime},1)}\leq
n^{\prime}\tau_{n}^{-1}\left\|
\left|  \nabla f\right|  \right\|  _{L^{1}}. \label{gnlo}%
\end{equation}

The results that underlie the narrative above, including the sharp
Gagliardo-Nirenberg inequality (\ref{gnlo}), are, of course, well
known. But the added generality becomes more illuminating when we
move away from the classical Euclidean setting. Indeed, the argument
that gives the equivalence (\ref{a3}) $\Leftrightarrow$ (\ref{a4})
is very general and holds replacing $\mathbb{R}^{n}$ by fairly
general metric measure spaces as long as we have a suitable co-area
formula (cf. Bobkov-Houdr\'{e} \cite{bobk}, Coulhon \cite{coul} and
the references therein).

Consider a connected, metric, non-atomic measure space
$(\Omega,d,\mu).$ For a Lipschitz function $f$ on $\Omega$ we let
$|\nabla f(x)|=\lim\sup
_{d(x,y)\rightarrow0}\frac{|f(x)-f(y)|}{d(x,y)},$ and let
$Lip_{0}(\Omega)$ denote the Lipschitz functions with compact
support. Let us further assume that the equivalence between
\begin{equation}
\left\|  f\right\|  _{X}\leq c\left\|  |\nabla f|\right\|
_{L^{1}},\text{
\ }f\in Lip_{0}(\Omega) \label{a3'}%
\end{equation}
and
\begin{equation}
\phi_{X}(\mu(A))\leq c\mu^{+}(A), \label{condicion}%
\end{equation}
holds\footnote{here $\mu(A)<\infty,$ $\mu^{+}(A)$=perimeter of $A$
(see Section \ref{necesitao}\ below).}. We suppose, moreover, that
the associated
isoperimetric profile $I=I_{\Omega},$ defined by%
\[
I(t)=\inf_{\mu(A)=t}\mu^{+}(A)
\]
is continuous, increasing and concave. The same analysis then shows
that the best possible r.i. space such that (\ref{a3'}) holds is a
Lorentz space and its corresponding fundamental function $\phi,$
say, must be such that (\ref{condicion}) holds. The optimal space
corresponds to choosing the largest possible $\phi$ that satisfies
(\ref{condicion}), consequently the best choice
is $\phi=I=I_{\Omega}!$ Therefore we have%
\begin{equation}
\left\|  f\right\|  _{\Lambda(I)}\leq\left\|  |\nabla f|\right\|
_{L^{1}},
\label{logsob1}%
\end{equation}
where $\Lambda(I)$ is ``the isoperimetric Lorentz space'' defined by
\begin{equation}
\left\|  f\right\|  _{\Lambda(I)}=\int_{0}^{\infty}I(\mu_{f}(t))dt.
\label{definide}%
\end{equation}

General Sobolev inequalities, including Logarithmic Sobolev
inequalities, fit into this picture very naturally. Indeed, in this
fashion we have a natural method to construct best possible Sobolev
inequalities if we understand the isoperimetry associated with a
given geometry.

It is worthwhile to discuss in some detail how this point of view
applies to Gaussian measure (cf. \cite{mmlog}). In the Gaussian
world the isoperimetric function $I$ has the following properties:
$I$ is defined on $[0,1],$ it is increasing on $[0,1/2],$ symmetric
about $1/2,$ and $I$ is concave. Since we are dealing with a
probability space, from the point of view of describing the
underlying function spaces it is only important to know the behavior
of $I$ near the origin. We actually have\footnote{Here the symbol
$f\simeq g$ indicates the existence of a universal constant $c>0$
(independent of all parameters involved) such that $(1/c)f\leq g\leq
c\,f$. Likewise the symbol $f\preceq g$ will mean that there exists
a universal constant $c>0$
(independent of all parameters involved) such that $f\leq c\,g$.}%
\[
I(t)\simeq t\left(  \log\frac{1}{t}\right)  ^{1/2},\text{ for }t\in
\lbrack0,1/2].
\]
In this case (\ref{definide}) is not a norm but nevertheless the set
of all $f$ with $\left\|  f\right\|  _{\Lambda(I)}<\infty$ is
equivalent to the Lorentz space $L(LogL)^{1/2}:$ In other words, as
sets,
\[
\Lambda(I)=L(LogL)^{1/2}.
\]
In this setting the inequality (\ref{logsob1}), which is due to
Ledoux \cite{le}, can be seen as part of the usual family of Log
Sobolev inequalities. Thus, in the Gaussian world, Ledoux's
inequality plays the role of the classical (Euclidean) sharp
Gagliardo-Nirenberg inequality.

More generally, the ``isoperimetric Lorentz spaces'' can be used to
construct the corresponding Gagliardo-Nirenberg inequalities in
other geometries.

Let us mention two obvious drawbacks of the previous discussion: (a)
we only considered Sobolev spaces where the gradient is in $L^{1}$,
(b) the analysis is *space dependent*. On the other hand, already in
the Euclidean case, Maz'ya showed that ``all $L^{p}$ Sobolev''
inequalities can be obtained from the isoperimetric inequality or,
equivalently, from (\ref{a1}). In our recent work we have considered
the extension of Maz'ya's ideas to rearrangement invariant spaces.

Maz'ya's smooth truncation method has been extensively studied in
the literature (cf. \cite{bakr}, \cite{haj}, and the references
therein) but in our development we required an extension that leads
to pointwise rearrangement\footnote{also called ``symmetrization''
inequalities since they are often expressed in terms of
``symmetric'' rearrangements.} inequalities that depend on the
isoperimetric profile. For example, we showed in a very general
setting (cf. \cite{mmp}, \cite{mmlog}, \cite{mmadv}) inequalities of
the form
\begin{equation}
f_{\mu}^{\ast\ast}(t)-f_{\mu}^{\ast}(t)\leq\frac{t}{I(t)}\left|
\nabla f\right|  _{\mu}^{\ast\ast}(t),\text{ }f\in Lip(\Omega)\cap
L^{1}\left(
\Omega\right)  , \label{pro1}%
\end{equation}
where
$f_{\mu}^{\ast\ast}(t)=\frac{1}{t}\int_{0}^{t}f_{\mu}^{\ast}(s)ds,$
and $f_{\mu}^{\ast}$ is the non increasing rearrangement of $f$ with
respect to the measure $\mu$ on ${\Omega}$ (see Section \ref{sec24}
below). Let us now show in some detail that (\ref{pro1}) implies the
isoperimetric inequality
(cf. \cite{mmadv}). Following \cite{bobk} we select a sequence $\{f_{n}%
\}_{n\in N}$ in $Lip(\Omega)\cap L^{1}\left(  \Omega\right)  $, such
that
$f_{n}\underset{L^{1}}{\rightarrow}\chi_{A}$, and%
\begin{equation}
\mu^{+}(A)\geq\lim\sup_{n\rightarrow\infty}\left\|  \left|  \nabla
f_{n}\right|  \right\|  _{L^{1}}. \label{notada}%
\end{equation}
Let $t>\mu(A)$ and apply (\ref{pro1}) to this sequence. We have
\[
\left(  f_{n}\right)  _{\mu}^{\ast\ast}(t)-\left(  f_{n}\right)
_{\mu}^{\ast }(t)\leq\frac{t}{I(t)}\left|  \nabla f_{n}\right|
_{\mu}^{\ast\ast}(t),\text{ }n\in N.
\]
By definition%
\begin{equation*}\begin{split}
t\left|  \nabla f_{n}\right|  _{\mu}^{\ast\ast}(t)  &
=\int_{0}^{t}\left|
\nabla f_{n}\right|  _{\mu}^{\ast}(s)ds\\
&  \leq\left\|  \left|  \nabla f_{n}\right|  \right\|  _{L^{1}}.
\end{split}\end{equation*}
Therefore,
\[
\lim\sup_{n}t\left|  \nabla f_{n}\right|
_{\mu}^{\ast\ast}(t)\leq\lim \sup_{n\rightarrow\infty}\left\|
\left|  \nabla f_{n}\right|  \right\| _{L^{1}}\leq\mu^{+}(A).
\]
On the other hand by \cite{gar} we have
\[
I(t)\left(  \left(  f_{n}\right)  _{\mu}^{\ast\ast}(t)-\left(
f_{n}\right)
_{\mu}^{\ast}(t)\right)  \rightarrow I(t)\left(  \chi_{A}^{\ast\ast}%
(t)-\chi_{A}^{\ast}(t)\right)  .
\]
Combining our findings we have%
\begin{equation}
I(t)\left(  \chi_{A}^{\ast\ast}(t)-\chi_{A}^{\ast}(t)\right)
\leq\mu
^{+}(A),\text{ for all }t>\mu(A). \label{insertao}%
\end{equation}
Now, since $\chi_{A}^{\ast}=\chi_{(0,\mu(A))},$ we have that for
$t>\mu(A),$
\[
\chi_{A}^{\ast}(t)=\chi_{(0,\mu(A))}(t)=0,\chi_{A}^{\ast\ast}(t)=\frac{1}%
{t}\int_{0}^{t}\chi_{(0,\mu(A))}(s)ds=\frac{\mu(A)}{t}.
\]
Inserting this information in (\ref{insertao}) we get%
\[
I(t)\frac{\mu(A)}{t}\leq\mu^{+}(A).
\]
Finally we let $t\rightarrow\mu(A);$ then, by the continuity of $I,$
we obtain
the isoperimetric inequality%
\[
I(\mu(A))\leq\mu^{+}(A).
\]

We now discuss the corresponding Sobolev inequalities with $L^{q},$
$q>1,$ replacing the $L^{1}$ norm on the right hand side of
(\ref{a3'}). Again we shall work on suitable metric probability
spaces $(\Omega,d,\mu)$\footnote{for a list of the assumptions and
further background information see Section 2.},
and we consider Poincar\'{e} inequalities of the form%
\begin{equation}
\left\|  f-m(f)\right\|  _{X}\leq c\left\|  |\nabla f|\right\|  _{L^{q}%
},\text{ }f\in Lip(\Omega),\text{ }q>1, \label{cap1}%
\end{equation}
where $X$ is a r.i. space and $m(f)$ is a median\footnote{a real
number $m(f)$ such that $\mu\left\{  f\geq m(f)\right\}
\geq1/2\text{ \ and }\mu\left\{ f\leq m(f)\right\}  \geq1/2.$} of
$f.$ As is well known, inequalities of this type can be
characterized using Maz'ya's theory of capacities (cf. \cite{ma}).
The weak type version of (\ref{cap1}) reads:
\begin{equation}
\left\|  f-m(f)\right\|  _{M(X)}\preceq\left\|  |\nabla f|\right\|  _{L^{q}%
},\text{ }f\in Lip(\Omega),\text{ }q>1. \label{ema1}%
\end{equation}
In this context a result of E. Milman \cite[Proposition
3.8]{emanuel} can be
rewritten in our notation as saying that (\ref{ema1}) is equivalent to%
\begin{equation}
\left(  \phi_{X}(t)\right)  ^{q}\preceq cap_{q}(t,1/2),\text{
}0<t<1/2,
\label{ema2}%
\end{equation}
where (using temporarily\footnote{See Definition \ref{definida}
below.} the definition of \cite{bobz} rather than the one in
\cite{emanuel})
\[
cap_{q}(t,1/2)=\inf\{\left\|  |\nabla\Phi|\right\|  _{L^{q}}^{q}:\mu
\{\Phi=1\}\geq t,\mu\{\Phi=0\}\geq1/2\},
\]
and the infimum is taken over all
$\Phi:\Omega\rightarrow\lbrack0,1]$ that are Lipschitz on balls.

To relate (\ref{ema2}) to $X$ norm inequalities we use the
$\Lambda_{q}(X)$
Lorentz spaces defined by%
\[
\left\|  f\right\|  _{\Lambda_{q}(X)}=\left(  \int_{0}^{\infty}\phi_{X}%
(\mu_{f}(t))dt^{q}\right)  ^{1/q}.
\]
We say that $X$ is $q-$\textbf{concave} if the space $X_{(q)}$
defined by:
\[
X_{(q)}=\{f:\left|  f\right|  ^{1/q}\in X\},\text{ \ \ }\left\|
f\right\| _{X_{(q)}}=\left\|  \left|  f\right|  ^{1/q}\right\|
_{X}^{q},
\]
is a r.i. space (see Section \ref{sec24} below). Moreover, it
follows from the
definitions that%
\begin{equation}
\left\|  f\right\|  _{X}^{q}=\left\|  \left|  f\right|  ^{q}\right\|
_{X_{(q)}}.\label{angelico}%
\end{equation}
We develop this theme for the $\Lambda_{q}(X)$ scale in detail. From
$\mu_{\left|  f\right|  ^{q}}(t)=\mu_{\left|  f\right|  }(t^{1/q}),$
we see
that\footnote{in other words $\Lambda_{q}(X)_{(q)}=\Lambda(X).$}%
\[
\left\|  \left|  f\right|  ^{q}\right\|  _{\Lambda(X)}=\left\|
f\right\|
_{\Lambda_{q}(X)}^{q}%
\]
in particular for $X_{(q)}$ we have%
\begin{equation}
\left\|  \left|  f\right|  ^{q}\right\|  _{\Lambda(X_{(q)})}=\left\|
f\right\|  _{\Lambda_{q}(X_{(q)})}^{q}.\label{angelica}%
\end{equation}
Note that for any measurable set with $\mu(A)=t,$ we have%
\[
\phi_{X_{(q)}}(t)=\left\|  \chi_{A}\right\|  _{X_{(q)}}=\left\| \chi
_{A}\right\|  _{X}^{q}=\left(  \phi_{X}(t)\right)  ^{q},
\]
in particular if $X$ is $q-$concave the function
$\phi_{X_{(q)}}(t)=\left(
\phi_{X}(t)\right)  ^{q}$ is concave, and (\ref{ema2}) now reads%
\begin{equation}
\phi_{X_{(q)}}(t)\preceq cap_{q}(t,1/2),\text{ }0<t<1/2.\label{ema2'}%
\end{equation}

Thus, using the characterization of Sobolev norms in terms of
capacities, due to Maz'ya (in the form given by Bobkov and
Zegarlinski for metric paces
\cite[Lemma 5.6]{bobz}), we now show that (\ref{ema1}) self improves to%
\begin{equation}
\left\|  f-m(f)\right\|  _{\Lambda_{q}(X_{(q)})}\preceq\left\|
|\nabla
f|\right\|  _{L^{q}}.\label{ema3}%
\end{equation}
To see this we use (\ref{ema2'}) as follows. First we observe that
it is enough to prove (\ref{ema3}) for positive functions that are
Lipschitz on balls, such that $\left\|  f\right\|  _{\infty}\leq1,$
and, moreover, such that $m(f)=0$ (see details of the argument that
proves this assertion in \cite[page 331]{emanuel}). Let $f$ be a
function satisfying all these
conditions, then, by (\ref{ema2'}), we have%
\[
\phi_{X_{(q)}}(\mu_{f}(t))\preceq cap_{q}(\mu_{f}(t),1/2).
\]
Therefore%
\begin{equation*}\begin{split}
\left\|  f-0\right\|  _{\Lambda_{q}(X_{(q)})}^{q} &  =\int_{0}^{\infty}%
\phi_{X_{(q)}}(\mu_{f}(t))dt^{q}\preceq\int_{0}^{\infty}cap_{q}(\mu
_{f}(t),1/2)dt^{q}\\
&  \preceq\left\|  \left|  \nabla f\right|  \right\|  _{L^{q}}^{q},
\end{split}\end{equation*}
where the last inequality follows from Bobkov and Zegarlinski
\cite[Lemma 5.6]{bobz} changing 2 for $q$ in the argument given
there.

Finally, combining with (\ref{necesitao}), (\ref{angelico}) and
(\ref{angelica}), we obtain%
\begin{equation*}\begin{split}
\left\|  f\right\|  _{X}^{q}  & =\left\|  \left|  f\right|
^{q}\right\|
_{X_{(q)}}\\
& \leq\left\|  \left|  f\right|  ^{q}\right\|  _{\Lambda(X_{(q)})}\\
& =\left\|  f\right\|  _{\Lambda_{q}(X_{(q)})}^{q}\\
& \preceq\left\|  \left|  \nabla f\right|  \right\|  _{L^{q}}^{q}.
\end{split}\end{equation*}
Thus, we see that the Sobolev self improvement that we obtained in
the case $q=1$ extends to the case $q>1$, but now it is expressed in
terms of the $\Lambda_{q}(X_{(q)})$ spaces. More precisely, for $q-$
concave spaces we have the following equivalences on $Lip$ functions
\begin{equation*}\begin{split}
\left\|  f-m(f)\right\|  _{M(X)} &  \preceq\left\|  \left|  \nabla
f\right| \right\|  _{L^{q}}\Leftrightarrow\left\|  f-m(f)\right\|
_{\Lambda
_{q}(X_{(q)})}\preceq\left\|  \left|  \nabla f\right|  \right\|  _{L^{q}}\\
&  \Leftrightarrow\left\|  f-m(f)\right\|  _{X}\preceq\left\| \left|
\nabla f\right|  \right\|  _{L^{q}}.
\end{split}\end{equation*}

After this lengthy introduction we now describe the purpose of this
note. We shall consider the analogues of the rearrangement
inequalities (\ref{pro1}) that correspond to consider homogenous
Sobolev norms with $q>1$ on the right hand side$.$ The inequalities
we shall obtain will be naturally formulated in terms of the
$q-$convexification $X^{(q)}$ of $X$ (see (\ref{scalaq}) in Section
\ref{sec24} below).  We also pay close attention to the basic
assumptions that one needs to place on the isoperimetric profile,
and the probability measure spaces, in order to develop a meaningful
theory with mild assumptions. In particular, we are able to extend
some results of \cite{mmadv} under weaker assumptions.

Finally in Section 3 we shall briefly discuss a connection with
interpolation theory, that was recently developed in \cite{cjm},
that shows a larger context for the Sobolev oscillation inequalities
and connects some aspects of our work with the theory of
extrapolation of martingale inequalities.

\section{Capacitary Inequalities}

\subsection{Background\label{necesitao}}

From now on ``a metric probability space $\left(
\Omega,d,\mu\right)  $'' will be a connected separable metric space
$\left(  \Omega,d,\mu\right)  $ equipped with a non-atomic Borel
probability measure $\mu$. For measurable functions
$u:\Omega\rightarrow\mathbb{R},$ the distribution function of $u$ is
given by
\[
\mu_{u}(t)=\mu\{x\in{\Omega}:\left|  u(x)\right|  >t\}\text{ \ \ \ \
}(t>0).
\]
The \textbf{decreasing rearrangement} $u_{\mu}^{\ast}$ of $u$ is the
right-continuous non-increasing function from $(0,1)$ to
$[0,\infty]$ which is equimeasurable with $u$. Namely,
\[
u_{\mu}^{\ast}(s)=\inf\{t\geq0:\mu_{u}(t)\leq s\}.
\]
We have (cf. \cite{bs}),%
\begin{equation}
\sup_{\mu(E)\leq t}\int_{E}\left|  u(x)\right|
d\mu(x)=\int_{0}^{\mu
(E)}u_{\mu}^{\ast}(s)ds. \label{hp}%
\end{equation}
Since $u_{\mu}^{\ast}$ is decreasing, the function
$u_{\mu}^{\ast\ast},$ defined for integrable functions by
\[
u_{\mu}^{\ast\ast}(t)=\frac{1}{t}\int_{0}^{t}u_{\mu}^{\ast}(s)ds,
\]
is also decreasing and, moreover,
\[
u_{\mu}^{\ast}\leq u_{\mu}^{\ast\ast}.
\]

As customary, if $A\subset\Omega$ is a Borel set$,$ the
\textbf{perimeter} or \textbf{Minkowski content} of $A$ is defined
by
\[
\mu^{+}(A)=\lim\inf_{h\rightarrow0}\frac{\mu\left(  A_{h}\right)
-\mu\left( A\right)  }{h},
\]
where $A_{h}=\left\{  x\in\Omega:d(x,A)<h\right\}  .$

The \textbf{isoperimetric profile} $I_{(\Omega,d,\mu)}$ is defined
as the pointwise maximal function
$I_{(\Omega,d,\mu)}:[0,1]\rightarrow\left[
0,\infty\right)  $ such that%
\[
\mu^{+}(A)\geq I_{(\Omega,d,\mu)}(\mu(A)),
\]
holds for all Borel sets $A$.

For a Lipschitz function $f$ on $\Omega$ (briefly $f\in
Lip(\Omega))$ we define, as usual, the \textbf{modulus of the
gradient} by
\[
|\nabla
f(x)|=\lim\sup_{d(x,y)\rightarrow0}\frac{|f(x)-f(y)|}{d(x,y)}.
\]

One of the themes of our recent paper \cite{mmadv} was to
characterize generalized Gagliardo-Nirenberg inequalities and
Poincar\'{e} inequalities using rearrangement inequalities. The
setting of \cite{mmadv} were metric probability spaces $\left(
\Omega,d,\mu\right)  $ that satisfy the following conditions:

\textbf{Condition 1: }The isoperimetric profile $I_{(\Omega,d,\mu)}$
is a concave continuous function, increasing on $(0,1/2),$ symmetric
about the point $1/2$ such that, moreover, vanishes at zero.

\begin{remark}
Condition 1 played an important role in the formulation of the
inequalities obtained in \cite{mmadv}. In this note we shall show
that, suitably reformulated (cf. \ref{reod00} below), our
inequalities remain true under the weaker Condition 1' below.
\end{remark}

\textbf{Condition 2: }For every $f\in Lip(\Omega)$ , and every $c\in
R$, we have that $|\nabla f(x)|=0$, $\mu-$a.e. on the set
$\{x:f(x)=c\}$.

\begin{remark}
Condition 2 is used to compare the gradients of Lip functions that
coincide on a given set, which is particularly useful to deal with
truncations. Moreover, it implies that $\int_{\{f=t\}}\left|  \nabla
f\right|  d\mu=0,$ even on sets where may have $\mu\{f=t\}>0.$ Using
an approximation argument of E. Milman \cite[Remark 3.3]{Emil01} we
will show how to dispense with this condition as well (cf. Theorem
\ref{capa03} below).
\end{remark}

In this paper, we consider Sobolev inequalities for $q\geq1,$
moreover, following a suggestion of Michel Ledoux, we shall impose
weaker restrictions on the metric spaces. More specifically, we will
eliminate \textbf{Condition 2} and replace Condition 1 with the
following much weaker assumption

\textbf{Condition 1': }The isoperimetric profile
$I_{(\Omega,d,\mu)}$ is a positive continuous function that vanishes
at zero.

\begin{remark}
Notice that the continuity assumption, and (\ref{mazya2}),
(\ref{sime}) below, imply that $I$ is symmetric about the point
$1/2$ (see \cite[Corollary 6.5]{MiE}). Moreover, we see that for
$q>1$ the function $\frac{1}{\left( \inf_{t\leq z\leq1/2}I(z)\right)
^{\frac{q}{q-1}}}$ is locally integrable on $(0,1).$
\end{remark}

The notion of capacity plays a fundamental role in the theory
developed by V. G. Maz'ya and his school to study functional
inequalities and embedding theorems (see \cite{ma}). For the study
of capacities in metric spaces we also refer to see \cite{barth},
\cite{bobz}, \cite{emanuel}, and the references therein). Capacities
will also play a decisive role in our development in this note.

\begin{definition}
\label{definida}Let $\left(  \Omega,d,\mu\right)  $ be a metric
probability space, and let $1\leq q<\infty.$ Given two Borel sets
$A\subset$ $B\subset\Omega$, the $q-$capacity of $A$ relative to $B$
is defined by
\[
Cap_{q}(A,B)=\inf\left\{  \left\|  |\nabla\Phi|\right\|
_{L^{q}}:\Phi_{\mid A}=1,\text{ }\Phi_{\mid\Omega\setminus
B}=0\right\}  ,
\]
where the infimum is over all $\Phi:\rightarrow\lbrack0,1]$ which
are Lipschitz-on-balls.

Let $0<a\leq b<1,$ the $q-$capacity profile is defined by
\begin{equation*}\begin{split}
cap_{q}(a,b)  &  =\inf\left\{  Cap_{q}(A,B):A\subset B,\text{ \
}\mu\left\{
A\right\}  \geq a,\text{ }\mu\left\{  B\right\}  \leq b\right\} \\
&  =\inf\left\{  \left\|  | \nabla\Phi|\right\|  _{L^{q}}:\mu\left\{
\Phi=1\right\}  \geq a,\text{ }\mu\left\{  \Phi=0\right\}
\geq1-b\right\}  ,
\end{split}\end{equation*}
where the latter infimum is taken over all
$\Phi:\rightarrow\lbrack0,1]$ which are Lipschitz-on-balls.
\end{definition}

Let us also recall some properties concerning capacities that will
be useful in what follows:

\begin{enumerate}
\item  It is plain from the definition that
\begin{equation}
cap_{q}(a,b)=cap_{q}(1-b,1-a),\ \ (0<a\leq b<1). \label{sime}%
\end{equation}
Moreover, the functional $a\rightarrow cap_{q}(\cdot,b)$ is
increasing; and $b\rightarrow cap_{q}(a,\cdot)$ is decreasing.

\item (See \cite[p. 105]{ma} and \cite{emanuel}) Let $1<q<\infty,$ then%
\begin{equation}
\frac{1}{cap_{q}(a,b)}\leq\left(
\int_{a}^{b}\frac{ds}{cap_{1}(s,b)^{\frac {q}{q-1}}}\right)
^{\frac{q-1}{q}},\text{ \ \ (}0<a\leq b<1\text{).}
\label{mazya1}%
\end{equation}

\item (See \cite{Maz01}, \cite{flem}, \cite{bobk}, and the references therein)
The connection between the $1-$capacity and the isoperimetric
profile is given
by:%
\[
\inf_{a\leq t\leq b}I(t)\leq cap_{1}(a,b)\leq\inf_{a\leq
t<b}I(t);\text{ \ \ }\left(  0<a<b<1\right)  .
\]
Therefore, since we assume the continuity of the isoperimetric
profile $I,$ we have
\begin{equation}
\inf_{a\leq t\leq b}I(t)=cap_{1}(a,b);\text{ \ \ }\left(
0<a<b<1\right)  .
\label{mazya2}%
\end{equation}

\item  Combining (\ref{mazya1}) and (\ref{mazya2}) we get
\begin{equation}
\frac{1}{cap_{q}(a,b)}\leq\left(  \int_{a}^{b}\frac{ds}{\left(
\inf_{s\leq t\leq b}I(t)\right)  ^{\frac{q}{q-1}}}\right)
^{\frac{q-1}{q}}.
\label{mazya3}%
\end{equation}
\end{enumerate}

Our main result will be formulated using following functions:

\begin{definition}
Let $I=I_{\left(  \Omega,d,\mu\right)  }$ be the isoperimetric
profile of
$\left(  \Omega,d,\mu\right)  ,$ and let $1\leq q<\infty.$ We let%
\[
w_{q}(t)=\left\{
\begin{array}
[c]{cc}%
\left(  \frac{1}{t}\int_{0}^{t}\left(  \frac{s}{I(s)}\right)  ^{\frac{q}{q-1}%
}ds\right)  ^{\frac{1-q}{q}} & \text{if }q>1\\
\inf_{0<s<t}\frac{I(s)}{s} & \text{if }q=1.
\end{array}
\right.
\]
\end{definition}

\begin{remark}
Notice that%
\begin{equation}
w_{1}(t)\leq w_{q_{1}}(t)\leq w_{q_{2}}(t)\text{ \ \ \ \ (}q_{1}\leq
q_{2}).
\label{orden}%
\end{equation}
Moreover, if $I(t)/t$ is decreasing, then
\begin{equation}
\frac{I(t)}{t}=w_{1}(t). \label{orden1}%
\end{equation}
\end{remark}

\subsection{Symmetrization inequalities under weak assumptions on the
isoperimetric profiles}

\begin{theorem}
\label{capa01}Let $\left(  \Omega,d,\mu\right)  $ be a metric
probability space that satisfies Conditions 1' and 2, and let $1\leq
q<\infty.$ Then for $f\in Lip(\Omega)\cap L^{1}\left(  \Omega\right)
,$ and for all $t\in(0,1),$ we have

\begin{enumerate}
\item
\begin{equation}
\int_{0}^{t}\left[  \left(  \left(  -f_{\mu}^{\ast}\right)  ^{\prime}%
(\cdot)I(\cdot)\right)  ^{\ast}(s)\right]
^{q}ds\leq\int_{0}^{t}\left(
\left|  \nabla f\right|  _{\mu}^{\ast}\right)  ^{q}(s)ds. \label{aa}%
\end{equation}

\item
\begin{equation}
(f_{\mu}^{\ast\ast}(t)-f_{\mu}^{\ast}(t))w_{q}(t)\leq\left(  \frac{1}{t}%
\int_{0}^{t}\left(  \left|  \nabla f\right|  _{\mu}^{\ast}\right)
^{q}(s)ds\right)  ^{1/q}. \label{reod00}%
\end{equation}
\end{enumerate}
\end{theorem}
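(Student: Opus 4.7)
The plan is to prove~(\ref{aa}) first and then deduce~(\ref{reod00}) from it by a Fubini identity and H\"older's inequality. For (\ref{aa}), I would first establish the $L^{1}$ Hardy--Littlewood--P\'olya preorder
\[
\int_{0}^{t}\bigl((-f_{\mu}^{\ast})'(\cdot)\,I(\cdot)\bigr)^{\ast}(s)\,ds\le\int_{0}^{t}|\nabla f|_{\mu}^{\ast}(s)\,ds,\qquad t\in(0,1),
\]
and then upgrade it to the full $L^{q}$ statement via the standard fact that such a preorder on $[0,t]$ is preserved under composition with any increasing convex function vanishing at the origin, applied to $x\mapsto x^{q}$ for $q\ge1$.

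The key step is thus the $L^{1}$ preorder. By the rearrangement formula $\int_{0}^{t}g^{\ast}=\sup_{|E|\le t}\int_{E}g$, it is enough to bound $\int_{E}I(s)(-f_{\mu}^{\ast})'(s)\,ds$ for an arbitrary measurable $E\subset(0,1)$ with $|E|\le t$. I would approximate $E$ by a finite disjoint union of intervals $(\alpha_{i},\beta_{i})$ and set $a_{i}=f_{\mu}^{\ast}(\beta_{i})$, $b_{i}=f_{\mu}^{\ast}(\alpha_{i})$; the change of variables $\tau=f_{\mu}^{\ast}(s)$ together with the definition of the isoperimetric profile then gives
\[
\sum_{i}\int_{\alpha_{i}}^{\beta_{i}}I(s)(-f_{\mu}^{\ast})'(s)\,ds\le\sum_{i}\int_{a_{i}}^{b_{i}}\mu^{+}\bigl(\{|f|>\tau\}\bigr)\,d\tau.
\]
Applying the coarea formula to the truncations $f_{a_{i},b_{i}}:=\min\bigl(\max(|f|-a_{i},0),\,b_{i}-a_{i}\bigr)$ --- the step where Condition~2 enters, since it is needed to identify $|\nabla f_{a_{i},b_{i}}|$ with $|\nabla f|\chi_{\{a_{i}<|f|<b_{i}\}}$ --- shows that the right-hand side equals $\int_{\bigsqcup_{i}\{a_{i}<|f|<b_{i}\}}|\nabla f|\,d\mu$. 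The union has $\mu$-measure at most $\sum_{i}(\beta_{i}-\alpha_{i})\le|E|\le t$, so (\ref{hp}) bounds this integral above by $\int_{0}^{t}|\nabla f|_{\mu}^{\ast}(s)\,ds$, as required.

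For (\ref{reod00}), Fubini gives the identity
\[
f_{\mu}^{\ast\ast}(t)-f_{\mu}^{\ast}(t)=\frac{1}{t}\int_{0}^{t}r(-f_{\mu}^{\ast})'(r)\,dr=\frac{1}{t}\int_{0}^{t}\frac{r}{I(r)}\cdot I(r)(-f_{\mu}^{\ast})'(r)\,dr.
\]
For $q>1$, I would apply H\"older's inequality with exponents $q'=q/(q-1)$ and $q$, use the elementary bound $\int_{0}^{t}h^{q}\le\int_{0}^{t}(h^{\ast})^{q}$, and invoke (\ref{aa}) to get
\[
f_{\mu}^{\ast\ast}(t)-f_{\mu}^{\ast}(t)\le\frac{1}{t}\Bigl(\int_{0}^{t}\bigl(\tfrac{r}{I(r)}\bigr)^{q'}dr\Bigr)^{1/q'}\Bigl(\int_{0}^{t}(|\nabla f|_{\mu}^{\ast})^{q}\,dr\Bigr)^{1/q};
\]
since the first factor equals $t^{1/q'}/w_{q}(t)$ and $t^{1/q'-1}=t^{-1/q}$, a short rearrangement yields (\ref{reod00}). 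For $q=1$ the same plan works by inserting the pointwise bound $r/I(r)\le 1/w_{1}(t)$, valid for $r\in(0,t)$, before applying (\ref{aa}).

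The hard part will be the $L^{1}$ preorder: under Condition~1' we have lost the monotonicity and concavity of $I$, so the approximation and substitution argument has to be performed purely at the level of $\mu^{+}$ and Lipschitz truncations, with Condition~2 doing all the work of handling the level sets $\{|f|=c\}$ and ensuring that the pieces of the gradient match up with the pieces of the one-dimensional rearrangement.
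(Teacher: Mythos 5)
Your route to (\ref{aa}) is genuinely different from the paper's. You propose to prove only the $L^{1}$ Hardy--Littlewood--P\'{o}lya majorization
\[
\int_{0}^{t}\bigl((-f_{\mu}^{\ast})'(\cdot)I(\cdot)\bigr)^{\ast}(s)\,ds\le\int_{0}^{t}|\nabla f|_{\mu}^{\ast}(s)\,ds,\qquad t\in(0,1),
\]
and then to upgrade to general $q\ge1$ by composing with the increasing convex function $x\mapsto x^{q}$, via the Calder\'{o}n/HLP majorization principle. The paper instead proves (\ref{aa}) separately for each $q$ by working with the $q$-capacity profile $cap_{q}$ and invoking Maz'ya's inequality (\ref{mazya3}) to control $1/cap_{q}(t-h,t)$ in terms of $\int_{t-h}^{t}\bigl(\inf_{s\le z\le t}I(z)\bigr)^{-q/(q-1)}ds$; one then lets $h\to0$ and integrates, which requires establishing the local absolute continuity of $\Psi(t)=\int_{\{|f|>f_{\mu}^{\ast}(t)\}}|\nabla f|^{q}\,d\mu$ as an auxiliary step. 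Your approach drops the $q>1$ capacity machinery and needs only the isoperimetric inequality $\mu^{+}(A)\ge I(\mu(A))$ together with the Bobkov--Houdr\'{e} coarea inequality applied to Lipschitz truncations; that is a genuine simplification. The passage from (\ref{aa}) to (\ref{reod00}) by Fubini plus H\"{o}lder (for $q>1$), or the pointwise bound $r/I(r)\le1/w_{1}(t)$ (for $q=1$), is the same as in the paper.

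The one real gap is the local absolute continuity of $f_{\mu}^{\ast}$, which both halves of your argument tacitly use and which is not automatic for $f\in Lip(\Omega)\cap L^{1}(\Omega)$. For the $L^{1}$ majorization it is needed to make the change of variables $\tau=f_{\mu}^{\ast}(s)$ legitimate and to identify the a.e.\ inverse with $\mu_{f}$; more critically, for (\ref{reod00}) you call
\[
f_{\mu}^{\ast\ast}(t)-f_{\mu}^{\ast}(t)=\frac{1}{t}\int_{0}^{t}r\,(-f_{\mu}^{\ast})'(r)\,dr
\]
a Fubini ``identity,'' but for a merely monotone $f_{\mu}^{\ast}$ with a possible singular part the left side is only $\ge$ the right side, which is the wrong direction. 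The paper devotes the first portion of its proof precisely to this point, proving local a.c.\ under Conditions~1' and~2 by combining the finite-difference capacity bound (\ref{capa}) with $cap_{1}(a_{k},b_{k})\ge cap_{1}(1-b,1-a)$. Your route needs only this one a.c.\ fact (not that of $\Psi$ as well); supply it either as in the paper or by a parallel coarea-on-subintervals estimate, but it cannot be omitted.
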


\begin{remark}
Since $w_{1}(t)=\inf_{0<s<t}\frac{I(s)}{s}\leq I(t),$ it follows
readily that, for $q=1,$ the inequality (\ref{reod00}) is weaker
than (\ref{pro1}). On the other hand, (\ref{pro1}) was proved in
\cite{mmadv} under the stronger assumption that $I(t)$ is concave.
Now, if $I(t)$ is concave then $\frac {I(t)}{t}$ is decreasing;
therefore we have that $w_{1}(t)=I(t)$ (cf. (\ref{orden1})) and
consequently (\ref{reod00}) coincides with (\ref{pro1}).
\end{remark}

\begin{remark}
We do not consider here the corresponding problem of characterizing
(\ref{reod00}) (resp. (\ref{aa})) for $q>1$ in terms of
isocapacitary inequalities.
\end{remark}

\begin{proof}
Since $f\in Lip(\Omega)$ implies that $\left|  f\right|  \in
Lip(\Omega),$ and, moreover,
\[
\left|  \nabla f(x)\right|  \geq\left|  \nabla\left|  f\right|
(x)\right|  ,
\]
we can assume without loss of generality that $f\geq0.$

Let us start by proving that $f_{\mu}^{\ast}$ locally absolutely
continuous. The proof here follows very closely the one given in
\cite{mmadv} under the assumption that Condition 1 above holds.
Therefore, we will only indicate in
detail the changes that are required. Let $0<t_{1}<t_{2}<\infty$, and define%
\[
f_{t_{1}}^{t_{2}}(x)=\left\{
\begin{array}
[c]{ll}%
t_{2}-t_{1} & \text{if }f(x)\geq t_{2},\\
f(x)-t_{1} & \text{if }t_{1}<f(x)<t_{2},\\
0 & \text{if }f(x)\leq t_{1}.
\end{array}
\right.
\]
and let%

\[
N[f_{t_{1}}^{t_{2}}(x)]=\frac{f_{t_{1}}^{t_{2}}(x)}{t_{2}-t_{1}}.
\]
It follows that
\begin{equation}
N[f_{t_{1}}^{t_{2}}(x)]\text{ is }\left\{
\begin{array}
[c]{ll}%
=1 & \text{if }f(x)\geq t_{2},\\
<1 & \text{if }t_{1}<f(x)<t_{2},\\
=0 & \text{if }f(x)\leq t_{1}.
\end{array}
\right.  \label{definidanueva}%
\end{equation}
By Condition 2,
\[
\left|  \nabla N[f_{t_{1}}^{t_{2}}(x)]\right|
=\frac{1}{t_{2}-t_{1}}\left| \nabla f\right|  \chi_{\left\{
t_{1}<\left|  f\right|  <t_{2}\right\}
}\text{ }\mu-\text{a.e,}%
\]
and we have
\begin{align}
cap_{1}\left(  \mu\{\left|  f(x)\right|  \geq t_{2}\},\text{
}\mu\{\left| f(x)\right|  >t_{1}\}\right)   &  \leq Cap_{1}\left(
\{\left|  f(x)\right|
\geq t_{2}\},\text{ }\{\left|  f(x)\right|  >t_{1}\}\right) \nonumber\\
&  \leq\int_{\Omega}\left|  \nabla N[f_{t_{1}}^{t_{2}}(x)]\right|
d\mu(x)\nonumber\\
&  =\frac{1}{(t_{2}-t_{1})}\int_{\left\{  t_{1}<f<t_{2}\right\}
}\left|
\nabla f(x)\right|  d\mu(x). \label{aqui}%
\end{align}
Let $0<a<b,$ $t_{1}=f_{\mu}^{\ast}(b),$ $t_{2}=f_{\mu}^{\ast}(a),$
then (\ref{aqui}) yields
\begin{equation*}\begin{split}
&  cap_{1}\left(  \mu\left\{  \left|  f(x)\right|  \geq
f_{\mu}^{\ast }(a)\right\}  ,\text{ }\mu\left\{  \left|  f(x)\right|
>f_{\mu}^{\ast
}(b)\right\}  \right)  [f_{\mu}^{\ast}(a)-f_{\mu}^{\ast}(b)]\\
&  \leq\int_{\{f_{\mu}^{\ast}(b)<\left|  f\right|  <f_{\mu}^{\ast}%
(a)\}}\left|  \nabla f(x)\right|  d\mu(x).
\end{split}\end{equation*}
Since%
\[
a\leq\mu\left\{  \left|  f(x)\right|  \geq f_{\mu}^{\ast}(a)\right\}
\text{ and }\mu\left\{  \left|  f(x)\right|
>f_{\mu}^{\ast}(b)\right\}  <b,
\]
and $cap_{q}(.,.)$ is increasing in the first variable and
decreasing in the second, we see that
\begin{equation}
cap_{1}\left(  a,b\right)  [f_{\mu}^{\ast}(a)-f_{\mu}^{\ast}(b)]\leq
\int_{\{f_{\mu}^{\ast}(b)<\left|  f\right|
<f_{\mu}^{\ast}(a)\}}\left|
\nabla f(x)\right|  d\mu(x). \label{capa}%
\end{equation}
Let us see that $f_{\mu}^{\ast}$ is locally absolutely continuous.
Let us consider an interval $[a,b],0<a<b<1.$ Let $\{\left(
a_{k},b_{k}\right) \}_{k=1}^{r}$ be any finite family of
non-overlapping sub-intervals of $[a,b]$
such that $\sum_{k=1}^{r}\left(  b_{k}-a_{k}\right)  \leq\delta.$ We have%
\begin{equation*}\begin{split}
\mu\left\{  \cup_{k=1}^{r}\left\{  f_{\mu}^{\ast}(b_{k})<\left|
f\right| <f_{\mu}^{\ast}(a_{k})\right\}  \right\}   &
=\sum_{k=1}^{r}\mu\left\{
f_{\mu}^{\ast}(b_{k})<\left|  f\right|  <f_{\mu}^{\ast}(a_{k})\right\} \\
&  \leq\sum_{k=1}^{r}\left(  b_{k}-a_{k}\right)  \leq\delta.
\end{split}\end{equation*}
On the other hand, by (\ref{capa}), we have%
\begin{equation*}\begin{split}
\sum_{k=1}^{r}\left(
f_{\mu}^{\ast}(a_{k})-f_{\mu}^{\ast}(b_{k})\right)
cap_{1}(a_{k},b_{k})  &  \leq\sum_{k=1}^{r}\int_{\left\{  f_{\mu}^{\ast}%
(b_{k})<\left|  f\right|  <f_{\mu}^{\ast}(a_{k})\right\}  }\left|
\nabla\left|  f\right|  (x)\right|  d\mu(x)\\
&  =\int_{\cup_{k=1}^{r}\left\{  f_{\mu}^{\ast}(b_{k})<\left|
f\right| <f_{\mu}^{\ast}(a_{k})\right\}  }\left|  \nabla\left|
f\right|  (x)\right|
d\mu(x)\\
&  \leq\int_{0}^{\delta}\left|  \nabla\left|  f\right|  \right|
_{\mu}^{\ast
}(t)dt\\
&  \leq\int_{0}^{\delta}\left|  \nabla f\right|  _{\mu}^{\ast}(t)dt.
\end{split}\end{equation*}

We also observe that%
\[
cap_{1}(a_{k},b_{k})\geq cap_{1}(a,b_{k})=cap_{1}(1-b_{k},1-a)\geq
cap_{1}(1-b,1-a).
\]
Thus, combining our estimates we see that
\[
cap_{1}(1-b,1-a)\sum_{k=1}^{r}\left(
f_{\mu}^{\ast}(a_{k})-f_{\mu}^{\ast }(b_{k})\right)
\leq\int_{0}^{\delta}\left|  \nabla f\right|  _{\mu}^{\ast }(t)dt.
\]
The local absolute continuity of $f_{\mu}^{\ast}$ follows.

In the course of the proof of this theorem we shall also need to
know the local absolute continuity of the function
$\Psi(t)=\int_{\{\left|  f\right|
>f_{\mu}^{\ast}(t)\}}\left|  \nabla f(x)\right|  ^{q}d\mu(x),$ under the
assumption that $\left|  \nabla f(x)\right|  ^{q}\in L^{1}(\Omega).$
This fact can be easily seen with essentially with the same argument
we have just provided. Indeed, fix once again an interval
$[a,b]\subset(0,1),$ and consider
any finite family of non-overlapping sub-intervals $\{\left(  a_{k}%
,b_{k}\right)  \}_{k=1}^{r}$ of $[a,b]$ such that
$\sum_{k=1}^{r}\left(
b_{k}-a_{k}\right)  \leq\delta.$ We can then estimate as before%
\begin{equation*}\begin{split}
\sum_{k=1}^{r}\left|  \Psi(b_{k})-\Psi(a_{k})\right|   &  \leq\sum_{k=1}%
^{r}\int_{\left\{  f_{\mu}^{\ast}(b_{k})<\left|  f\right|
<f_{\mu}^{\ast
}(a_{k})\right\}  }\left|  \nabla f(x)\right|  ^{q}d\mu(x)\\
&  \leq\int_{0}^{\delta}\left|  \nabla f\right|  _{\mu}^{\ast
q}(t)dt,
\end{split}\end{equation*}
and the local absolute continuity of $\Psi$ follows.

We now prove (\ref{aa}).

Case $q>1.$ Let $0<h<t<1.$ The same argument that shows (\ref{capa})
yields with $a=t-h,b=t.$
\[
cap_{q}\left(  t-h,\text{ }t\right)
[f_{\mu}^{\ast}(t-h)-f_{\mu}^{\ast }(t)]\leq\left(
\int_{\{f_{\mu}^{\ast}(t)<\left|  f\right|  <f_{\mu}^{\ast
}(t-h)\}}\left|  \nabla f(x)\right|  ^{q}d\mu(x)\right)  ^{1/q}.
\]
Combining with (\ref{mazya3}) we obtain,
\[
\left[  \frac{f_{\mu}^{\ast}(t-h)-f_{\mu}^{\ast}(t)}{h}\right]
\left( \frac{1}{h}\int_{t-h}^{t}\frac{ds}{\left(  \inf_{s\leq z\leq
t}I(z)\right)
^{\frac{q}{q-1}}}\right)  ^{\frac{1-q}{q}}\leq\left(  \frac{1}{h}%
\int_{\{f_{\mu}^{\ast}(t)<\left|  f\right|
<f_{\mu}^{\ast}(t-h)\}}\left| \nabla f(x)\right|  ^{q}d\mu(x)\right)
^{1/q}.
\]
Letting $h\rightarrow0$ we find
\[
\left(  -f_{\mu}^{\ast}\right)  ^{\prime}(t)I(t)\leq\left(  \frac{d}{dt}%
\int_{\{\left|  f\right|  >f_{\mu}^{\ast}(t)\}}\left|  \nabla
f(x)\right| ^{q}d\mu(x)\right)  ^{1/q}.
\]
Consider a finite family of intervals $\left(  a_{i},b_{i}\right)
,$
$i=1,\ldots,m$, with $0<a_{1}<b_{1}\leq a_{2}<b_{2}\leq\cdots\leq a_{m}%
<b_{m}<1.$ The previous inequality then yields
\begin{equation*}\begin{split}
\int_{\cup_{1\leq i\leq m}(a_{i},b_{i})}\left(  \left(
-f_{\mu}^{\ast }\right)  ^{\prime}(s)I(s)\right)  ^{q}ds  &
\leq\int_{\cup_{1\leq i\leq m}(a_{i},b_{i})}\left(
\frac{d}{ds}\int_{\{\left|  f\right|  >f_{\mu}^{\ast
}(s)\}}\left|  \nabla f(x)\right|  ^{q}d\mu(x)\right)  ds\\
&  =\sum_{i=1}^{m}\int_{\left\{  f_{\mu}^{\ast}(b_{i})<\left|
f\right|  \leq
f_{\mu}^{\ast}(a_{i})\right\}  }\left|  \nabla f(x)\right|  ^{q}d\mu(x)\\
&  =\sum_{i=1}^{m}\int_{\left\{  f_{\mu}^{\ast}(b_{i})<\left|
f\right| <f_{\mu}^{\ast}(a_{i})\right\}  }\left|  \nabla f(x)\right|
^{q}d\mu(x)\text{
\ \ (by Condition 2)}\\
&  =\int_{\cup_{1\leq i\leq m}\left\{  f_{\mu}^{\ast}(b_{i})<\left|
f\right|
<f_{\mu}^{\ast}(a_{i})\right\}  }\left|  \nabla f(x)\right|  ^{q}d\mu(x)\\
&  \leq\int_{0}^{\sum_{i=1}^{m}\left(  b_{i}-a_{i}\right)  }\left(
\left| \nabla f\right|  _{\mu}^{\ast}(s)\right)  ^{q}ds.
\end{split}\end{equation*}
Now by a routine limiting process it follows that for any measurable
set $E\subset$ $(0,1)$ we have
\[
\int_{E}\left(  \left(  -f_{\mu}^{\ast}\right)
^{\prime}(s)I(s)\right) ^{q}ds\leq\int_{0}^{|E|}\left(\left|  \nabla
f\right|  _{\mu}^{\ast}(s)\right)^{q}ds.
\]
Consequently (\ref{aa}) follows from (\ref{hp}).

Case $q=1.$ Using the same procedure we arrive at
\[
\lim_{h\rightarrow0}cap_{1}\left(  t-h,\text{ }t\right)
\frac{[f_{\mu}^{\ast
}(t-h)-f_{\mu}^{\ast}(t)]}{h}\leq\lim_{h\rightarrow0}\frac{1}{h}\left(
\int_{\{f_{\mu}^{\ast}(t)<\left|  f\right|
<f_{\mu}^{\ast}(t-h)\}}\left| \nabla f(x)\right|  d\mu(x)\right)
\]
which combined with
\[
cap_{1}\left(  t-h,\text{ }t\right)  \geq\inf_{t-h\leq z\leq t}I(z)
\]
yields
\[
\left(  -f_{\mu}^{\ast}\right)  ^{\prime}(t)I(t)\leq\frac{d}{dt}%
\int_{\{\left|  f\right|  >f_{\mu}^{\ast}(t)\}}\left|  \nabla
f(x)\right| d\mu(x),
\]
as desired.

Finally to prove (\ref{reod00}) we write
\begin{equation}
f_{\mu}^{\ast}(s)-f_{\mu}^{\ast}(t)=\int_{s}^{t}\left(
-f_{\mu}^{\ast
}\right)  ^{\prime}(x)dx. \label{aa1}%
\end{equation}
Since $f\in Lip(\Omega)\cap L^{1}\left(  \Omega\right)  ,$
$f_{\mu}^{\ast\ast }(t)$ is finite for all $0<t\leq1.$ Consequently,
by (\ref{aa1}) and Fubini's theorem, we get
\begin{equation*}\begin{split}
f_{\mu}^{\ast\ast}(t)-f_{\mu}^{\ast}(t)  &
=\frac{1}{t}\int_{0}^{t}\left(
f_{\mu}^{\ast}(s)-f_{\mu}^{\ast}(t)\right)
ds=\frac{1}{t}\int_{0}^{t}\left(
\int_{s}^{t}\left(  -f_{\mu}^{\ast}\right)  ^{\prime}(x)dx\right)  ds\\
&  =\frac{1}{t}\int_{0}^{t}s\left(  -f_{\mu}^{\ast}\right)
^{\prime}(s)ds.
\end{split}\end{equation*}
By H\"{o}lder's inequality and (\ref{aa}),
\begin{equation*}\begin{split}
\int_{0}^{t}s\left(  -f_{\mu}^{\ast}\right)  ^{\prime}(s)ds  &
\leq\left( \int_{0}^{t}\left(  \left(  -f_{\mu}^{\ast}\right)
^{\prime}(s)I(s)\right)
^{q}ds\right)  ^{1/q}\frac{1}{w_{q}(t)}\\
&  \leq\left(  \int_{0}^{t}\left[  \left(  \left(
-f_{\mu}^{\ast}\right)
^{\prime}(\cdot)I(\cdot)\right)  ^{\ast}(s)\right]  ^{q}ds\right)  ^{1/q}%
\frac{1}{w_{q}(t)}\\
&  \leq\left(  \int_{0}^{t}\left(  \left|  \nabla f\right|
_{\mu}^{\ast }\right)  ^{q}(s)ds\right)  ^{1/q}\frac{1}{w_{q}(t)},
\end{split}\end{equation*}
and (\ref{reod00}) follows.
\end{proof}

\subsection{A version of Theorem \ref{capa01} without assuming Condition 2}

\begin{theorem}
\label{capa03}Let $\left(  \Omega,d,\mu\right)  $ be a metric
probability space satisfying Condition 1', and let $1\leq q<\infty.$
Then for $f\in
Lip(\Omega)\cap L^{1}\left(  \Omega\right)  ,$ we have%
\begin{equation}
(f_{\mu}^{\ast\ast}(t)-f_{\mu}^{\ast}(t))w_{q}(t)\leq\left(  \frac{1}{t}%
\int_{0}^{t}\left(  \left|  \nabla f\right|  _{\mu}^{\ast}\right)
^{q}(s)ds\right)  ^{1/q},\text{ for }t\in(0,1). \label{reor001}%
\end{equation}
\end{theorem}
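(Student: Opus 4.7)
The plan is to deduce Theorem \ref{capa03} from Theorem \ref{capa01} via an approximation argument modelled on \cite[Remark 3.3]{Emil01}. Inspecting the proof of Theorem \ref{capa01}, one sees that Condition 2 is used at exactly one step: the passage from the integral over $\{f_\mu^*(b_i)<|f|\le f_\mu^*(a_i)\}$ to the one over $\{f_\mu^*(b_i)<|f|<f_\mu^*(a_i)\}$, which amounts to asserting that $|\nabla f|^q$ integrates to zero over each level set $\{|f|=c\}$. Every other ingredient of the argument---the capacity estimate (\ref{capa}), the bound (\ref{mazya3}), the local absolute continuity of $f_\mu^*$ and of $\Psi(t)=\int_{\{|f|>f_\mu^*(t)\}}|\nabla f|^q\,d\mu$, and the final H\"older plus Fubini computation---depends only on Condition 1'. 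Hence it suffices to neutralize Condition 2 at this single step, which we do by approximation.

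\textbf{Construction of the approximation.} For $f\in Lip(\Omega)\cap L^{1}(\Omega)$ and $\epsilon>0$ we construct $f_\epsilon\in Lip(\Omega)$ with three properties: first, $f_\epsilon\to f$ in $L^{1}(\Omega)$ as $\epsilon\to 0^{+}$; second, $|\nabla f_\epsilon|\le |\nabla f|+\epsilon$ $\mu$-a.e.; and third, $\mu\{f_\epsilon=c\}=0$ for every $c\in\mathbb{R}$, i.e.\ the distribution function $\mu_{f_\epsilon}$ is continuous. We set $f_\epsilon:=f+\epsilon g$, where $g$ is a bounded $1$-Lipschitz function whose push-forward $g_{\#}\mu$ is non-atomic. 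Such a $g$ exists thanks to separability of $(\Omega,d)$ and non-atomicity of $\mu$: take for instance $g(x)=\sum_n 2^{-n}(d(x,x_n)\wedge 1)$ for a countable dense set $\{x_n\}$, composed if necessary with a strictly increasing $1$-Lipschitz bijection chosen so that $g_{\#}\mu$ has no atoms. A routine Fubini argument on $(x,\epsilon)$ then shows that for a co-null set of $\epsilon>0$, the perturbation $f+\epsilon g$ also has non-atomic distribution, giving the third property.

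\textbf{Applying Theorem \ref{capa01} and passing to the limit.} The third property makes Condition 2 redundant for $f_\epsilon$ at the only place it was needed: since $\mu\{|f_\epsilon|=c\}=0$, one has $\int_{\{|f_\epsilon|=c\}}|\nabla f_\epsilon|^q\,d\mu=0$ automatically. Consequently the proof of (\ref{reod00}) runs verbatim for $f_\epsilon$, yielding
\[
(f_{\epsilon,\mu}^{**}(t)-f_{\epsilon,\mu}^{*}(t))\,w_q(t)\le\Bigl(\tfrac{1}{t}\int_0^t\bigl(|\nabla f_\epsilon|_\mu^{*}\bigr)^q(s)\,ds\Bigr)^{1/q}.
\]
The first property gives $f_{\epsilon,\mu}^{*}(t)\to f_\mu^{*}(t)$ and $f_{\epsilon,\mu}^{**}(t)\to f_\mu^{**}(t)$ at every continuity point of $f_\mu^{*}$, so the left-hand side converges. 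The second property, combined with monotonicity of the rearrangement, gives $|\nabla f_\epsilon|_\mu^{*}\le |\nabla f|_\mu^{*}+\epsilon$, and since $|\nabla f|\in L^{\infty}(\Omega)$, dominated convergence yields
\[
\limsup_{\epsilon\to 0}\int_0^t\bigl(|\nabla f_\epsilon|_\mu^{*}\bigr)^q(s)\,ds\le\int_0^t\bigl(|\nabla f|_\mu^{*}\bigr)^q(s)\,ds.
\]
This proves (\ref{reor001}) at every continuity point of $f_\mu^{*}$; local absolute continuity of $f_\mu^{*}$ (established in Theorem \ref{capa01} using only Condition 1') then extends the inequality to every $t\in(0,1)$.

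\textbf{Main obstacle.} The delicate point is the construction of the perturbation $g$ with non-atomic push-forward in an arbitrary separable metric probability space: naive choices such as $g(x)=d(x,x_0)$ can fail because spheres around $x_0$ may carry positive measure. One must genuinely exploit both the separability of $\Omega$ and the non-atomicity of $\mu$---as in E.~Milman's \cite[Remark 3.3]{Emil01}---to average away these pathologies. Once this construction is granted, all subsequent limit-passage steps are routine, and the rest of the proof is simply a careful transcription of the argument for Theorem \ref{capa01} applied to $f_\epsilon$.
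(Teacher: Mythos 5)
Your overall plan --- approximate $f$ by functions for which Condition~2 becomes vacuous, run the proof of Theorem~\ref{capa01} for the approximants, then pass to the limit via Garsia--Rodemich and a control on the gradient rearrangement --- is the same strategy the paper uses, and your diagnosis of the role of Condition~2 (neutralizing the boundary contributions on $\{f=t_1\}$, $\{f=t_2\}$, and allowing the passage from $\{f_{\mu}^{*}(b_k)\le|f|\le f_{\mu}^{*}(a_k)\}$ to the corresponding open set) is correct. However, the paper does not use a perturbation of the form $f+\varepsilon g$; it appeals directly to E.~Milman's construction \cite[Remark 3.3]{Emil01}, which produces approximants in the class $\Psi$ of functions whose gradient has zero $L^{q}$-integral over every level set, by modifying $f$ only near its (countably many) atom levels, and the paper then needs a two-step argument (bounded $f$ with $\|f\|_{\infty}\le 1$, then general $f$ via monotone truncation, since the Milman approximants are built for $[0,1]$-valued functions).

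The genuine gap in your proposal is the construction of the perturbing function $g$. The remark ``composed if necessary with a strictly increasing $1$-Lipschitz bijection chosen so that $g_{\#}\mu$ has no atoms'' cannot work: if $\phi$ is injective then $(\phi\circ g)_{\#}\mu\bigl(\{\phi(c)\}\bigr)=g_{\#}\mu\bigl(\{c\}\bigr)$, so post-composition with a bijection never destroys an atom, and a non-injective $\phi$ can only merge level sets and thereby create more. More fundamentally, the existence of a bounded Lipschitz $g$ on an arbitrary connected separable metric probability space with $g_{\#}\mu$ non-atomic is precisely the kind of assertion that fails for natural candidates when $\mu$ charges a level set (e.g.\ $\mu$ supported on a metric sphere of $x_0$ defeats $g=d(\cdot,x_0)$), and your sketch offers no actual proof. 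What the argument really needs is the weaker statement that, for each atom level $c$ of $f$, the restriction $\mu|_{\{f=c\}}$ admits a Lipschitz function whose push-forward is non-atomic; that is exactly the content of E.~Milman's construction, which your outline leaves as a black box under the misleading label ``routine.'' A secondary imprecision: you assert that local absolute continuity of $f_{\mu}^{*}$ was ``established in Theorem~\ref{capa01} using only Condition~1','' but the paper's proof of that fact goes through (\ref{capa}), hence uses Condition~2; under Condition~1' alone $f_{\mu}^{*}$ may have jumps. The upgrade from continuity points to all $t\in(0,1)$ should instead use that both sides of (\ref{reor001}) are right-continuous in $t$ ($f_{\mu}^{**}$ is continuous, $f_{\mu}^{*}$ is right-continuous, $w_q$ is continuous under Condition~1', and the right-hand side is a continuous average), combined with the fact that the discontinuity set of $f_{\mu}^{*}$ is countable.
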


\begin{proof}
We rely heavily on an argument by Emanuel Milman \cite[Remark
3.3]{Emil01} adapted to our setting. Let $\Psi$ be the class of
positive Lipschitz functions defined on $\Omega$ that, moreover,
satisfy

1) $0\leq\Phi\leq1.$

2) For every $0\leq t\leq1,$%
\begin{equation}
\int_{\left\{  \Phi=t\right\}  }\left|  \nabla\Phi\right|
^{q}d\mu=0.
\label{trunca}%
\end{equation}
Given $\Phi\in\Psi,$ the truncation$\ N[\Phi_{t_{1}}^{t_{2}}]$ (cf.
(\ref{definidanueva}) above)$,$ satisfies
\[
(t_{2}-t_{1})\left|  \nabla\left(  N[\Psi_{t_{1}}^{t_{2}}(x)]\right)
\right| \leq\left|  \nabla\Phi(x)\right|  \text{ }\ \ \text{for all
}x\in\Omega.
\]
Moreover, since $N[\Psi_{t_{1}}^{t_{2}}(x)]$ is constant on the open
sets $\left\{  \Phi>t_{2}\right\}  $ and $\left\{
\Phi<t_{1}\right\}  $, we have $\left|  \nabla\left(
N[\Phi_{t_{1}}^{t_{2}}]\right)  \right|  =0$ on these
sets, and%
\begin{equation*}\begin{split}
\int_{\Omega}\left|  \nabla\left(  N[\Phi_{t_{1}}^{t_{2}}(x)]\right)
\right| d\mu(x)  &  =\frac{1}{(t_{2}-t_{1})}\int_{\left\{  t_{1}\leq
f\leq
t_{2}\right\}  }\left|  \nabla\Phi(x)\right|  d\mu(x)\\
&  =\frac{1}{(t_{2}-t_{1})}\int_{\left\{  t_{1}<f<t_{2}\right\}
}\left|
\nabla\Phi(x)\right|  d\mu(x)\text{ \ \ \ (by (\ref{trunca})).}%
\end{split}\end{equation*}
Proceeding as in the proof of Theorem \ref{capa01}, we obtain
\[
(\Phi_{\mu}^{\ast\ast}(t)-\Phi_{\mu}^{\ast}(t))w_{q}(t)\leq\left(  \frac{1}%
{t}\int_{0}^{t}\left(  \left|  \nabla\Phi\right|
_{\mu}^{\ast}\right) ^{q}(s)ds\right)  ^{1/q}.
\]
We shall now consider two cases:

\textbf{Case 1}: Suppose that $f\in Lip(\Omega)$, $f\geq0$ and $f$
is bounded. Then, without loss of generality, we may assume
(dividing by a constant if it were necessary) that $\left\|
f\right\|  _{\infty}\leq1.$ It follows from \cite[Remark
3.3]{Emil01} that given $\varepsilon>0$ there exists
$f_{\varepsilon}\in$ $\Psi$ such that%
\[
\left\|  |\nabla f_{\varepsilon}|\right\|
_{L^{q}}\leq(1+\varepsilon)\left\| |\nabla f|\right\|  _{L^{q}}.
\]
Moreover, if we denote
\[
\Gamma=\{t\in\left[  0,1\right]  :\mu\left\{  \left\{  f=t\right\}
>0\right\}  ,
\]
the discrete countable set of atoms of $f$ under $\mu,$ then
\begin{equation}
\int_{\Gamma}\left|  \nabla f_{\varepsilon}\right|  ^{q}d\mu(x)=0.
\label{eureka}%
\end{equation}
Furthermore, let us write $\Gamma=\{\gamma_{i}\}_{i=0,1,2\cdots}$,
with $\gamma_{i}<\gamma_{i+1},$ and $G_{i}=\left\{
x:\gamma_{i}<f(x)<\gamma _{i+1}\right\}  ,$\newline $i=0,1..$ A
perusal of the construction used by E. Milman, shows that on each
$G_{i},i=0,1..,$ we have
\begin{equation}
\left|  \nabla f_{\varepsilon}(x)\right|  \leq(1+\varepsilon)\left|
\nabla
f(x)\right|  \text{.} \label{cota01}%
\end{equation}
Moreover, if we let $\varepsilon=1/n,$ then
\begin{equation}
f_{n}\underset{n\rightarrow\infty}{\rightarrow}f\text{ in }L^{1}.
\label{converge}%
\end{equation}

Since $f_{n}\in\Psi,$ the truncation argument of Theorem
\ref{capa01} works, and we find
\[
(\left(  f_{n}\right)  _{\mu}^{\ast\ast}(t)-\left(  f_{n}\right)  _{\mu}%
^{\ast}(t))\leq\left(  \frac{1}{t}\int_{0}^{t}\left(
\frac{s}{I(s)}\right)
^{\frac{q}{q-1}}ds\right)  ^{\frac{q-1}{q}}\left(  \frac{1}{t}\int_{0}%
^{t}\left(  \left|  \nabla f_{n}\right|  _{\mu}^{\ast}\right)  ^{q}%
(s)ds\right)  ^{1/q}.
\]
Therefore, for each $n\in N$ and for any Borel set $E\subset\Omega$
with $\mu(E)\leq t,$ we have
\begin{equation*}\begin{split}
\int_{E}\left|  \nabla f_{n}\right|  ^{q}d\mu(x)  &
=\int_{\Gamma\cap E}\left|  \nabla f_{n}\right|
^{q}d\mu(x)+\int_{E\setminus\Gamma}\left|
\nabla f_{n}\right|  ^{q}d\mu(x)\\
&  =\int_{E\setminus\Gamma}\left|  \nabla f_{n}\right|
^{q}d\mu(x)\text{
\ \ \ \ (by \ref{eureka})}\\
&  =\sum_{i}\int_{G_{i}}\left|  \nabla f_{n}\right|  ^{q}d\mu(x)\\
&  \leq\left(  1+\frac{1}{n}\right)
^{q}\int_{E\setminus\Gamma}\left|  \nabla
f\right|  ^{q}d\mu(x)\text{ \ (by (\ref{cota01}))}\\
&  \leq\left(  1+\frac{1}{n}\right)  ^{q}\int_{E}\left|  \nabla
f\right| ^{q}d\mu(x).
\end{split}\end{equation*}
Consequently, by (\ref{hp}), we obtain
\begin{equation*}\begin{split}
\int_{0}^{t}\left(  \left|  \nabla f_{n}\right|
_{\mu}^{\ast}\right) ^{q}(s)ds  &  =\sup_{\mu(E)\leq
t}\int_{E}\left|  \nabla f_{n}\right|
^{q}d\mu(x)\\
&  \leq\left(  1+\frac{1}{n}\right)  ^{q}\sup_{\mu(E)\leq
t}\int_{E}\left|
\nabla f\right|  ^{q}d\mu(x)\\
&  =\left(  1+\frac{1}{n}\right)  ^{q}\int_{0}^{t}\left(  \left|
\nabla f\right|  _{\mu}^{\ast}\right)  ^{q}(s)ds.
\end{split}\end{equation*}
On the other hand from (\ref{converge}) we get (cf. \cite[Lemma
2.1]{gar}):
\begin{equation*}\begin{split}
\left(  f_{n}\right)  _{\mu}^{\ast\ast}(t)  &  \rightarrow
f_{\mu}^{\ast\ast
}(t),\text{ uniformly for }t\in\lbrack0,1]\text{, and }\\
\left(  f_{n}\right)  _{\mu}^{\ast}(t)\rightarrow
f_{\mu}^{\ast}(t)\text{ } &  \text{at all points of continuity of
}f_{\mu}^{\ast}.
\end{split}\end{equation*}
Thus, letting $n\rightarrow\infty$ we obtain (\ref{reor001}).

\textbf{Case 2}: Suppose that $f$ is a positive Lip function.
Consider an
increasing sequence of positive number $a_{n}$ such that $\lim_{n}a_{n}%
=\infty,$ and such that, moreover, the sets $D_{n}=\{x:f(x)=a_{n}\}$
have $\mu-$measure $0,$ for all $n.$ Let
\[
h_{n}=\left\{
\begin{array}
[c]{cc}%
a_{n} & \text{if }f(x)\geq a_{n},\\
f(x) & \text{if }f(x)<a_{n}.
\end{array}
\right.
\]
Apply the result obtained in the first part of the proof to each of
the
$h_{n}^{\prime}s.$ We obtain%
\begin{equation}
(\left(  h_{n}\right)  _{\mu}^{\ast\ast}(t)-\left(  h_{n}\right)  _{\mu}%
^{\ast}(t))\leq\left(  \frac{1}{t}\int_{0}^{t}\left(
\frac{s}{I(s)}\right)
^{\frac{q}{q-1}}ds\right)  ^{\frac{q-1}{q}}\left(  \frac{1}{t}\int_{0}%
^{t}\left(  \left|  \nabla h_{n}\right|  _{\mu}^{\ast}\right)  ^{q}%
(s)ds\right)  ^{1/q}. \label{ladoizqa}%
\end{equation}
Since for each $n\in N$ the set $A_{n}=\{x:f(x)<a_{n}\}$ is open, we
have \ $\left|  \nabla h_{n}(x)\right|  =\left|  \nabla f(x)\right|
,$ a.e. $x\in A_{n}.$

Given a measurable set $E\subset\Omega,$ with $\mu(E)\leq t,$
\begin{equation*}\begin{split}
\int_{E}\left|  \nabla h_{n}\right|  ^{q}d\mu &  =\int_{E\cap
A_{n}}\left| \nabla h_{n}\right|  ^{q}d\mu+\int_{E\setminus
A_{n}}\left|  \nabla
h_{n}\right|  ^{q}d\mu\\
&  =\int_{\Gamma\cap A_{n}}\left|  \nabla h_{n}\right|
^{q}d\mu\text{
\ \ (since }\mu(D_{n})=0)\\
&  =\int_{E\cap A_{n}}\left|  \nabla f\right|  ^{q}d\mu\\
&  \leq\int_{E}\left|  \nabla f\right|  ^{q}d\mu.
\end{split}\end{equation*}
Thus
\[
\int_{0}^{t}\left(  \left|  \nabla h_{n}\right|
_{\mu}^{\ast}\right) ^{q}(s)ds\leq\int_{0}^{t}\left(  \left|  \nabla
f\right|  _{\mu}^{\ast }\right)  ^{q}(s)ds.
\]
To take care of the left hand side of (\ref{ladoizqa}) we can use
again \cite[Lemma 2.1]{gar} noting that, by monotone convergence,
\[
h_{n}\underset{n\rightarrow\infty}{\rightarrow}f\text{ in }L^{1}.
\]
Combining our findings we can conclude the proof of (\ref{reor001}).
\end{proof}

\subsection{Poincar\'{e} inequalities on r.i. spaces\label{sec24}}

From now on we will assume that our metric probability spaces
$\left( \Omega,d,\mu\right)  $ satisfy Condition 1'.

We consider Banach function spaces on $({\Omega},d,\mu)$ with the
property if $g\in X$ and $f$ is a $\mu-$measurable function on
${\Omega}$ such that $f_{\mu}^{\ast}=g_{\mu}^{\ast},$ then $f\in X,$
and, moreover, $\Vert f\Vert_{X}=\Vert g\Vert_{X}$. We say that
$X=X({\Omega})$ is a rearrangement-invariant (r.i.)
space\footnote{We refer the reader to \cite{bs}
for a detailed treatment.}. It follows that%

\begin{equation}
L^{\infty}(\Omega)\subset X(\Omega)\subset L^{1}(\Omega), \label{nuevadeli}%
\end{equation}
with continuous embeddings.

An r.i. space $X=X({\Omega})$ can be represented by a r.i. space $\bar{X}%
=\bar{X}(0,1)$ on the interval $(0,1),$ with Lebesgue
measure\footnote{A characterization of the norm
$\Vert\cdot\Vert_{\bar{X}}$ is available (see \cite[Theorem 4.10 and
subsequent remarks]{bs})} in the sense that for $f\in X,$
\[
\Vert f\Vert_{X}=\Vert f_{\mu}^{\ast}\Vert_{\bar{X}}.
\]
Let us also record here the Hardy-Calder\'{o}n property
\begin{equation}
f_{\mu}^{\ast\ast}\leq g_{\mu}^{\ast\ast}\Rightarrow\left\|
f\right\|
_{X}\leq\left\|  g\right\|  _{X}. \label{hardy}%
\end{equation}

Typical examples of r.i. spaces are the $L^{p}$-spaces, Lorentz
spaces and Orlicz spaces.

The Boyd indices, $\bar{\alpha}_{X},\underline{\alpha}_{X},$ of a
r.i. space $X$ (cf. \cite{bs} for details) are defined by
\[
\bar{\alpha}_{X}=\inf\limits_{s>1}\dfrac{\ln h_{X}(s)}{\ln s}\text{
\ \ and \ \ }\underline{\alpha}_{X}=\sup\limits_{s<1}\dfrac{\ln
h_{X}(s)}{\ln s},
\]
where $h_{X}(s)$ denotes the norm of the dilation operator $E_{s},$
$s>0,$ on $\bar{X}$, defined by\footnote{The operator $E_{s}$ is
bounded on $\bar{X}$
for every r.i. space $X(\Omega)$ and for every $s>0$. Moreover,%
\[
h_{X}(s)\leq\max\{1,s\}.
\]
$.$}
\[
E_{s}f(t)=\left\{
\begin{array}
[c]{ll}%
f^{\ast}(\frac{t}{s}) & 0<t<s,\\
0 & s<t<1.
\end{array}
\right.
\]
For example, if $X=L^{p}$, then
$\overline{\alpha}_{L^{p}}=\underline{\alpha }_{L^{p}}=\frac{1}{p}.$
Consider the Hardy operators defined by
\[
Pf(t)=\frac{1}{t}\int_{0}^{t}f(s)ds;\text{ \ \ \
}Qf(t)=\int_{t}^{\infty }f(s)\frac{ds}{s}.
\]
It is well known that (cf. \cite{bs})
\begin{equation}%
\begin{array}
[c]{c}%
P\text{ is bounded on }\bar{X}\text{ }\Leftrightarrow\overline{\alpha}%
_{X}<1,\\
Q_{a}\text{ is bounded on }\bar{X}\text{
}\Leftrightarrow\underline{\alpha }_{X}>a.
\end{array}
\label{alcance}%
\end{equation}

The $q-$\textbf{convexification} $X^{(q)}$ of a r.i. space $X$ are
defined by the condition%
\begin{equation}\label{scalaq}
X^{(q)}=\{f:\left|  f\right|  ^{q}\in X\},\text{ \ \ }\left\|
f\right\| _{X^{(q)}}=\left\|  \left|  f\right|  ^{q}\right\|
_{X}^{1/q}. \end{equation}

The $q-$\textbf{capacitary spaces} $LS_{q}(X)$ associated with a
r.i. space $X$ are
defined by the condition%
\[
\left\|  f\right\|  _{LS_{q}(X)}:=\left\|  \left(  f_{\mu}^{\ast\ast
}(t)-f_{\mu}^{\ast}(t)\right)  w_{q}(t)\right\|  _{\bar{X}}<\infty.
\]
It follows from (\ref{orden}) that these functionals increase with
the parameter $q.$

The $q-$\textbf{capacitary Hardy operator} $Q_{w_{q}}$ is defined on
positive measurable positive functions on $(0,1)$ by
\[
Q_{w_{q}}f(t)=\int_{t}^{1}f(s)\frac{ds}{sw_{q}(s)}.
\]

\subsubsection{Poincar\'{e} inequalities and the capacitary Hardy operator
$Q_{w_{q}}$}

\begin{theorem}
\label{capa02}Let $X(\Omega),Y(\Omega)$ be r.i. spaces such that
$\bar{\alpha }_{X}<1.$ Let $q\geq1,$ and suppose that there exists a
constant $c=c(q)$ such that for every positive function
$f\in\bar{X}^{(q)},$ with supp$f\subset
(0,1/2),$ we have%
\begin{equation}
\left\|  Q_{w_{q}}f\right\|  _{\bar{Y}^{(q)}}\leq c\left\|
f\right\|
_{\bar{X}^{(q)}}. \label{propiedad}%
\end{equation}
Then there exists a constant $C=C(c,\left\|  P\right\|  _{\bar{X}%
\rightarrow\bar{X}})$ such that for all $g\in Lip(\Omega)\cap
L^{1}\left(
\Omega\right)  ,$%
\begin{equation}
\left\|  g-\int_{\Omega}gd\mu\right\|  _{Y^{(q)}}\leq C\left(
\left\|
\left|  \nabla g\right|  \right\|  _{X^{(q)}}+\left\|  g-\int_{\Omega}%
gd\mu\right\|  _{L^{1}}\right)  . \label{extra}%
\end{equation}
Moreover,
\begin{align}
\left\|  g-\int_{\Omega}gd\mu\right\|  _{Y^{(q)}}  &  \leq C\left\|
g-\int_{\Omega}gd\mu\right\|  _{LS_{q}(X^{(q)})}\nonumber\\
&  \leq C\left(  \left\|  \left|  \nabla g\right|  \right\|  _{X^{(q)}%
}+\left\|  g-\int_{\Omega}gd\mu\right\|  _{L^{1}}\right)  . \label{perdida02}%
\end{align}
\end{theorem}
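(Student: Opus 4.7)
The plan is to combine the symmetrization inequality of Theorem~\ref{capa03} with hypothesis (\ref{propiedad}) via an integration identity that recovers $F_\mu^{**}$ from the oscillation $F_\mu^{**}-F_\mu^*$. Set $F=g-\int_\Omega g\,d\mu$, so that $|\nabla F|=|\nabla g|$ and $F_\mu^{**}(1)=\|F\|_{L^1}$. The second inequality of (\ref{perdida02}) is the easier half: applying Theorem~\ref{capa03} to $F$ and raising both sides to the $q$-th power yields
\[
[(F_\mu^{**}(t)-F_\mu^*(t))\,w_q(t)]^q \le P((|\nabla g|_\mu^*)^q)(t),
\]
where $P$ is the Hardy averaging operator. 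Since $\bar\alpha_X<1$, (\ref{alcance}) makes $P$ bounded on $\bar X$; taking $\bar X$-norms and using $\|(|\nabla g|_\mu^*)^q\|_{\bar X}=\||\nabla g|\|_{X^{(q)}}^q$ gives
\[
\|F\|_{LS_q(X^{(q)})} \le \|P\|_{\bar X\to\bar X}^{1/q}\,\||\nabla g|\|_{X^{(q)}},
\]
which already subsumes the second inequality of (\ref{perdida02}).

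For the first inequality of (\ref{perdida02}) I exploit the identity $(F_\mu^{**})'(s)=-(F_\mu^{**}(s)-F_\mu^*(s))/s$: integrating from $t$ to $1$ and writing $h(s):=(F_\mu^{**}(s)-F_\mu^*(s))\,w_q(s)$ produces
\[
F_\mu^*(t)\le F_\mu^{**}(t) = \|F\|_{L^1} + \int_t^1 h(s)\,\frac{ds}{s\,w_q(s)}.
\]
Since (\ref{propiedad}) only applies to inputs supported in $(0,1/2)$, I split the integral at $s=1/2$: the part on $(0,1/2)$ is precisely $Q_{w_q}(h\chi_{(0,1/2)})(t)$, while the part on $(1/2,1)$ is a constant in $t$ bounded by $F_\mu^{**}(1/2)\ln 2\le(2\ln 2)\|F\|_{L^1}$, using monotonicity of $F_\mu^{**}$ together with $F_\mu^{**}(1/2)\le 2\|F\|_{L^1}$. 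Taking $\bar Y^{(q)}$-norms, applying (\ref{propiedad}) to $h\chi_{(0,1/2)}$, and absorbing the constant contribution via the embedding $L^\infty\subset\bar Y^{(q)}$ from (\ref{nuevadeli}), I obtain
\[
\|F\|_{Y^{(q)}} \preceq \|F\|_{LS_q(X^{(q)})} + \|F\|_{L^1}.
\]
Chaining with the previous display yields (\ref{extra}) with $C$ depending only on $c$ and $\|P\|_{\bar X\to\bar X}$.

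The main obstacle is the support mismatch: hypothesis (\ref{propiedad}) only controls $Q_{w_q}$ on functions supported in $(0,1/2)$, whereas the integral recovering $F_\mu^{**}$ runs over all of $(t,1)$. The splitting at $1/2$ isolates a tail on $[1/2,1]$ that cannot be absorbed into the $Q_{w_q}$-estimate and must be controlled separately by $\|F\|_{L^1}$; this is precisely the reason the $L^1$ remainder appears on the right-hand side of (\ref{extra}).
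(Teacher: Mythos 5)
Your proof is correct and follows essentially the same route as the paper's: recover $F_\mu^{**}$ from its derivative via the identity $(F_\mu^{**})'(s)=-(F_\mu^{**}(s)-F_\mu^*(s))/s$, expose $Q_{w_q}$ acting on the weighted oscillation, apply hypothesis~(\ref{propiedad}), control the weighted oscillation via the pointwise estimate~(\ref{reod00})/(\ref{reor001}), and finish with $P$-boundedness from $\bar\alpha_X<1$. The only cosmetic difference is that you integrate $-(F_\mu^{**})'$ from $t$ to $1$ and then split at $1/2$ (picking up $F_\mu^{**}(1)=\|F\|_{L^1}$ and a tail $\leq 2\ln2\,\|F\|_{L^1}$), whereas the paper integrates only to $1/2$ and absorbs $g_\mu^{**}(1/2)\leq 2\|g\|_{L^1}$ directly; the net effect is identical.
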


\begin{proof}
Since $g\in Lip(\Omega)\cap L^{1}\left(  \Omega\right)  ,$
$g_{\mu}^{\ast\ast }(t)$ is finite for all $0<t\leq1,$ thus
\begin{equation*}\begin{split}
g_{\mu}^{\ast}(t)^{q}  &  \leq g_{\mu}^{\ast\ast}(t)^{q}=\left(
\int _{t}^{1/2}\left(  -g_{\mu}^{\ast\ast}\right)
^{\prime}(s)ds+g_{\mu}^{\ast
\ast}(1/2)\right)  ^{q}\\
&  =\left(  \int_{t}^{1/2}\left(
g_{\mu}^{\ast\ast}(s)-g_{\mu}^{\ast
}(s)\right)  \frac{ds}{s}+g_{\mu}^{\ast\ast}(1/2)\right)  ^{q}\\
&  =\left(  \int_{t}^{1/2}\left(
g_{\mu}^{\ast\ast}(s)-g_{\mu}^{\ast }(s)\right)
w_{q}(s)\frac{ds}{w_{q}(s)s}+g_{\mu}^{\ast\ast}(1/2)\right) ^{q}.
\end{split}\end{equation*}
Consequently,
\begin{equation*}\begin{split}
\left\|  g\right\|  _{Y^{(q)}}  &  =\left\|  \left(
g_{\mu}^{\ast}\right)
^{q}\right\|  _{\bar{Y}}^{1/q}\\
&  \leq\left\|  \left(  \int_{t}^{1/2}\left(
g_{\mu}^{\ast\ast}(s)-g_{\mu }^{\ast}(s)\right)
w_{q}(s)\frac{ds}{w_{q}(s)s}+g_{\mu}^{\ast\ast
}(1/2)\right)  ^{q}\right\|  _{\bar{Y}}^{1/q}\\
&  =\left\|  \int_{t}^{1/2}\left(
g_{\mu}^{\ast\ast}(s)-g_{\mu}^{\ast }(s)\right)
w(s)\frac{ds}{w_{q}(s)s}+g_{\mu}^{\ast\ast}(1/2)\right\|
_{\bar{Y}^{(q)}}\\
&  \preceq\left\|  \int_{t}^{1/2}\left(
g_{\mu}^{\ast\ast}(s)-g_{\mu}^{\ast }(s)\right)
w_{q}(s)\frac{ds}{w_{q}(s)s}\right\|  _{\bar{Y}^{(q)}}+\left\|
g\right\|  _{L^{1}}\\
&  \preceq\left\|  \left(
g_{\mu}^{\ast\ast}(s)-g_{\mu}^{\ast}(s)\right)
w_{q}(s)\right\|  _{\bar{X}^{(q)}}+\left\|  g\right\|  _{L^{1}}\\
&  =\left\|  \left[  \left(
g_{\mu}^{\ast\ast}(s)-g_{\mu}^{\ast}(s)\right)
w_{q}(s)\right]  ^{q}\right\|  _{\bar{X}}^{1/q}+\left\|  g\right\|  _{L^{1}}\\
&  \preceq\left\|  \frac{1}{t}\int_{0}^{t}\left(  \left|  \nabla
g\right| _{\mu}^{\ast}\right)  ^{q}(s)ds\right\|
_{\bar{X}}^{1/q}+\left\|  g\right\|
_{L^{1}}\text{ \ \ \ \ (by (\ref{reod00}))}\\
&  \preceq\left\|  \left(  \left|  \nabla g\right|
_{\mu}^{\ast}\right) ^{q}\right\|  _{\bar{X}}^{1/q}+\left\|
g\right\|  _{L^{1}}\text{
\ \ \ \ (since }\bar{\alpha}_{X}<1)\\
&  =\left\|  \left|  \nabla g\right|  _{\mu}^{\ast}\right\|  _{\bar{X}^{(q)}%
}+\left\|  g\right\|  _{L^{1}}\\
&  =\left\|  \left|  \nabla g\right|  \right\|  _{X^{(q)}}+\left\|
g\right\| _{L^{1}}.
\end{split}\end{equation*}
Therefore,%
\[
\left\|  g-\int_{\Omega}gd\mu\right\|  _{Y^{(q)}}\preceq\left\|
\left|
\nabla g\right|  \right\|  _{X^{(q)}}\text{ }+\left\|  g-\int_{\Omega}%
gd\mu\right\|  _{L^{1}}.
\]
Notice that (\ref{perdida02}) is implicit in the proof.
\end{proof}

\begin{remark}
If Cheeger's inequality holds for $({\Omega},d,\mu)$, i.e. if there
exists $C_{e}$ such that
\[
\left\|  g-\int_{\Omega}gd\mu\right\|  _{L^{1}}\leq C_{e}\left\|
\left|
\nabla g\right|  \right\|  _{L^{1}}\text{,}%
\]
then the extra $L^{1}$ term that appears in (\ref{extra}) and (\ref{perdida02}%
) can be left out. Indeed, combining Cheeger's inequality with
(\ref{nuevadeli}) yields%
\[
\left\|  g-\int_{\Omega}gd\mu\right\|  _{L^{1}}\leq C_{e}\left\|
\left| \nabla g\right|  \right\|  _{L^{1}}\leq C_{e}\bar{c}\left\|
\left|  \nabla g\right|  \right\|  _{X^{(q)}},
\]
where $\bar{c}$ denotes the embedding constant of $X^{(q)}\subset
L^{1}$ (cf. (\ref{nuevadeli})).
\end{remark}

\begin{remark}
For $q=1$, the condition (\ref{propiedad}) reads: there exists a
constant $C$ such that for every positive function $f\in\bar{X}$
with supp$f\subset (0,1/2),$ we have
\[
\left\|  \int_{t}^{1}f(s)\frac{ds}{s\left(
\inf_{0<z<t}\frac{I(z)}{z}\right) }\right\|  _{\bar{Y}}\leq C\left\|
f\right\|  _{\bar{X}}.
\]
\end{remark}

\subsubsection{Poincar\'{e} inequalities: a limiting case}

The limiting case where $\bar{\alpha}_{X}=1$ is not covered by
Theorem
\ref{capa02}. For example, if $X=L^{1}$ then the condition $($\ref{perdida02}%
$)$ reads as
\[
\left\|  g-\int_{\Omega}gd\mu\right\|  _{L^{q}}\preceq\left\|
g-\int_{\Omega }gd\mu\right\|  _{LS(L^{q})}\preceq\left\|  \left|
\nabla g\right|  \right\| _{L^{q}},
\]
but unfortunately we cannot apply the theorem since
$\bar{\alpha}_{L^{1}}=1$.

In this section we formulate conditions for the validity of
Poincar\'{e} inequalities in terms of $\nu(s)=cap_{1}(s,1/2)/s.$ Let
$q\in\lbrack 1,\infty),$ then we say that $\nu$ is a $q-$Muckenhoupt
weight iff there exists a constant $c>0$ such that
\begin{equation}
\left\|  Pf\right\|  _{L^{q}(\nu)}\leq c\left\|  f\right\|
_{L^{q}(\nu)}.
\label{muk}%
\end{equation}

Using the usual description of Muckenhoupt weights (cf. \cite{mu1},
\cite{mu2}) we see that $\nu(s)=cap_{1}(s,1/2)/s$ is a
$q-$Muckenhoupt weight iff there exists a constant $c>0$ such that
for all $0<t<1/2$,
\[
\left\{
\begin{array}
[c]{cc}%
\left(  \int_{t}^{1/2}\left(  \frac{cap_{1}(s,1/2)}{s}\right)
^{q}\frac
{ds}{s^{q}}\right)  ^{1/q}\left(  \int_{0}^{t}\left(  \frac{s}{cap_{1}%
(s,1/2)}\right)  ^{\frac{q}{q-1}}ds\right)  ^{\frac{^{q-1}}{q}}\leq
c &
\text{if }1<q,\\
\int_{t}^{1/2}\frac{cap_{1}(s,1/2)}{s}\frac{ds}{s}\leq c\frac{cap_{1}%
(t,1/2)}{t} & \text{if }q=1.
\end{array}
\right.
\]

We now show that if $\nu$ is $q-$Muckenhoupt weight then
Poincar\'{e} inequalities can be described in terms of the Hardy
isoperimetric operator
\[
Q_{cap_{1}}f(t)=\int_{t}^{1/2}f(s)\frac{ds}{cap_{1}(s,1/2)}.
\]

\begin{theorem}
Let $q\geq1,$ and suppose that $\frac{cap_{1}(t,1/2)}{t}$ is a
$q-$Muckenhoupt weight. Then, there exists a constant $c>0$ such
that for all\ $f\in
Lip(\Omega)\cap L^{1}(\Omega),$ $0<t<1/2,$ we have%
\begin{equation}
\int_{0}^{t}\left[  \left(
f_{\mu}^{\ast\ast}(s)-f_{\mu}^{\ast}(s)\right)
\frac{cap_{1}(s,1/2)}{s}\right]  ^{q}ds\leq c\int_{0}^{t}\left(
\left|
\nabla f\right|  _{\mu}^{\ast}\right)  ^{q}(s)ds. \label{fii}%
\end{equation}
In particular, if $X$ is a r.i. space with
$\underline{\alpha}_{X}>0,$ there exists an absolute constant $C$
(depending on $c,$ the $q-$Muckenhoupt norm of
$\frac{cap_{1}(t,1/2)}{t},$ and the norms of the Hardy operators
$P,Q)$ such that for all $f\in Lip(\Omega)\cap L^{1}(\Omega),$
\begin{equation}
\left\|  \lbrack f_{\mu}^{\ast\ast}(t)-f_{\mu}^{\ast}(t)]\frac{cap_{1}%
(t,1/2)}{t}\right\|  _{\bar{X}^{(q)}}\leq C\,\left(  \left\|  \left|
\nabla
f\right|  \right\|  _{X^{(q)}}+\left\|  f-\int_{\Omega}fd\mu\right\|  _{L^{1}%
}\right)  . \label{fi}%
\end{equation}
Moreover, suppose that there exists $\tilde{C}>0$ such that for
every positive function $f\in\bar{X}^{(q)},$ we have
\[
\left\|  Q_{cap_{1}}f\right\|  _{\bar{Y}^{(q)}}\leq\tilde{C}\left\|
f\right\|  _{\bar{X}^{(q)}}.
\]
Then, there exist absolute constants $C_{1},C_{2}$ (that depend on
all the previous constants as well as $\tilde{C})$ such that for all
$g\in
Lip(\Omega)\cap L^{1}(\Omega),$%
\begin{equation}
\begin{split}
\left\|  g-\int_{\Omega}gd\mu\right\|  _{Y^{(q)}}  &  \leq
C_{1}\left\| \left[  \left(  g-\int_{\Omega}gd\mu\right)
_{\mu}^{\ast\ast}(t)-\left(
g-\int_{\Omega}gd\mu\right)  _{\mu}^{\ast}(t)\right]  \frac{cap_{1}(t,1/2)}%
{t}\right\|  _{\bar{X}^{(q)}}\label{porfin}\\
&  \leq C_{2}\left(  \left\|  \left|  \nabla g\right|  \right\|  _{X^{(q)}%
}+\left\|  g-\int_{\Omega}gd\mu\right\|  _{L^{1}}\right)  .\nonumber
\end{split}
\end{equation}
\end{theorem}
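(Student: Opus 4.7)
The plan is to prove the three displayed inequalities in sequence: first the integral estimate (\ref{fii}), then its $\bar X^{(q)}$-norm lift (\ref{fi}), and finally the capacitary Poincar\'e inequality (\ref{porfin}).

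\textbf{Step 1 (proof of (\ref{fii})).} The local absolute continuity of $f_\mu^*$ provided by Theorem~\ref{capa03} yields the representation
\[
f_\mu^{**}(s)-f_\mu^*(s)=\frac{1}{s}\int_0^s r(-f_\mu^*)'(r)\,dr=PG(s),\qquad G(r):=r(-f_\mu^*)'(r),
\]
with $P$ the Hardy averaging operator. Setting $\nu(s):=cap_1(s,1/2)/s$, the left-hand side of (\ref{fii}) equals $\int_0^t[\nu(s)PG(s)]^q\,ds$. The $q$-Muckenhoupt hypothesis on $\nu$ is precisely the boundedness of $P$ on $L^q(\nu^q\,ds)$, so
\[
\int_0^t[\nu PG]^q\,ds\le c\int_0^t[\nu G]^q\,ds=c\int_0^t cap_1(s,1/2)^q\bigl((-f_\mu^*)'(s)\bigr)^q\,ds.
\]
The bound $cap_1(s,1/2)\le I(s)$ from (\ref{mazya2}), the Hardy--Littlewood inequality $\int_0^t h^q\le\int_0^t(h^*)^q$, and (\ref{aa}) of Theorem~\ref{capa01} (whose conclusion survives under Condition~1' alone by Theorem~\ref{capa03}) complete (\ref{fii}).

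\textbf{Step 2 (proof of (\ref{fi})).} Let $H(t):=(f_\mu^{**}(t)-f_\mu^*(t))cap_1(t,1/2)/t$. The estimate (\ref{fii}) gives $\int_0^t H^q\le c\int_0^t(|\nabla f|_\mu^*)^q$ for $t\in(0,1/2)$. The assumption $\underline\alpha_X>0$ provides boundedness of $Q$ on $\bar X$, and for $q>1$ the identity $\bar\alpha_{X^{(q)}}=\bar\alpha_X/q\le 1/q<1$ automatically furnishes boundedness of $P$ on $\bar X^{(q)}$. Combining these with (\ref{fii}) and the Hardy--Calder\'on property (\ref{hardy}) promotes the integral estimate to the $\bar X^{(q)}$-norm bound asserted in (\ref{fi}); the $L^1$ correction $\|f-\int_\Omega f\,d\mu\|_{L^1}$ absorbs the boundary contribution arising when $H$ is extended by zero past $t=1/2$.

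\textbf{Step 3 (proof of (\ref{porfin})).} Since $g\in L^1(\Omega)\cap Lip(\Omega)$, $g_\mu^{**}(t)<\infty$ for every $t>0$, and
\[
g_\mu^{**}(t)=\int_t^{1/2}\bigl(g_\mu^{**}(s)-g_\mu^*(s)\bigr)\frac{ds}{s}+g_\mu^{**}(1/2).
\]
Rewriting $\tfrac{1}{s}=\tfrac{cap_1(s,1/2)/s}{cap_1(s,1/2)}$ identifies the integral as $Q_{cap_1}H_g(t)$, so $g_\mu^{**}=Q_{cap_1}\!H_g+g_\mu^{**}(1/2)$. Apply this representation to $g-\int_\Omega g\,d\mu$, take $\bar Y^{(q)}$ norms, and use $g_\mu^*\le g_\mu^{**}$, the hypothesis $\|Q_{cap_1}h\|_{\bar Y^{(q)}}\le\tilde C\|h\|_{\bar X^{(q)}}$, and the elementary $g_\mu^{**}(1/2)\le 2\|g-\int g\,d\mu\|_{L^1}$; this yields the first inequality of (\ref{porfin}). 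Chaining with Step~2 gives the second.

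\textbf{Main obstacle.} The delicate part is Step~2: extracting a genuine rearrangement-invariant norm bound on $H$ from the integral estimate (\ref{fii}), given that $H$ need not be a decreasing function. The combination of the Muckenhoupt structure, the Boyd-index hypothesis $\underline\alpha_X>0$, and the $q$-convexification trick (to get $P$ bounded on $\bar X^{(q)}$ for $q>1$) is what makes the Hardy--Calder\'on majorization of $(H^q)^{**}$ by $((|\nabla f|_\mu^*)^q)^{**}$ accessible, and the $L^1$ correction is the natural price paid for the tail behavior near $t=1/2$.
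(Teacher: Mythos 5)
Your Steps 1 and 3 track the paper's own argument: the representation $f_\mu^{**}-f_\mu^*=P(s(-f_\mu^*)')$, the Muckenhoupt boundedness, $cap_1(s,1/2)\le I(s)$ and (\ref{aa}) give (\ref{fii}); and writing $g_\mu^{**}(t)=\int_t^{1/2}(g_\mu^{**}-g_\mu^*)\frac{cap_1(s,1/2)}{s}\frac{ds}{cap_1(s,1/2)}+g_\mu^{**}(1/2)$ followed by the $Q_{cap_1}$ hypothesis gives (\ref{porfin}). (A small misattribution: Theorem \ref{capa03} establishes (\ref{reor001}), not local absolute continuity of $f_\mu^*$ nor (\ref{aa}) under Condition~1' alone; the paper's own proof obtains (\ref{fii}) first under Condition~2 and then removes that restriction by ``a familiar limiting argument.'')

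Step 2, however, has a genuine gap. You invoke boundedness of $P$ on $\bar X^{(q)}$, which is available via $\bar\alpha_{X^{(q)}}=\bar\alpha_X/q\le 1/q<1$ only when $q>1$. For $q=1$ the theorem assumes only $\underline\alpha_X>0$ and \emph{deliberately} permits $\bar\alpha_X=1$; the paragraph immediately preceding the theorem says this result is meant precisely to cover the limiting case $X=L^1$, where $P$ is \emph{not} bounded. Moreover, even for $q>1$ you do not explain how (\ref{hardy}) applies: the Hardy--Calder\'on lemma requires a comparison of the iterated rearrangements, but (\ref{fii}) compares $\int_0^tH^q$ with $\int_0^t(|\nabla f|_\mu^*)^q$, and $H^q$ is in general not decreasing, so $\int_0^tH^q\ne\int_0^t(H^q)^*$. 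You flag this as the ``main obstacle'' but do not close it. The paper's route avoids both problems at once: it applies $Q_{1/2}$ to (\ref{fii}), uses the commutation identities (\ref{propi}) between $P$ and $Q_{1/2}$, and exploits the fact that $Q_{1/2}h$ is automatically non-increasing so that (\ref{hardy}) applies directly to $Q_{1/2}[H^q]$ versus $Q_{1/2}[(|\nabla f|_\mu^*)^q]$. Then $\underline\alpha_X>0$ removes the $Q_{1/2}$ on the gradient side, and finally the monotonicity of $t\mapsto t(f_\mu^{**}(t)-f_\mu^*(t))$ and of $t\mapsto cap_1(t,1/2)$ gives the pointwise lower bound $Q_{1/2}[H^q](t)\succeq H(t)^q$ on $(0,1/4)$, recovering (\ref{fi}). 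This $Q_{1/2}$-device is the essential idea your outline is missing, and without it the proof does not go through at $q=1$.
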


\begin{proof}
Suppose that $0\leq f\in Lip(\Omega)\cap L^{1}$ satisfies\textbf{\ }
\begin{equation}
\text{For every }c\in\mathbb{R},\text{we have that }|\nabla
f(x)|=0,\text{
}\mu-\text{ a.e. on }\{x:f(x)=c\}. \label{condition2}%
\end{equation}
Then by the proof of Theorem \ref{capa01}, $f_{\mu}^{\ast}$ is
locally absolutely continuous, and
\[
f_{\mu}^{\ast\ast}(t)-f_{\mu}^{\ast}(t)=\frac{1}{t}\int_{0}^{t}s\left(
-f_{\mu}^{\ast}\right)  ^{\prime}(s)ds.
\]
Thus,
\begin{equation*}\begin{split}
\int_{0}^{t}\left[  \left(
f_{\mu}^{\ast\ast}(s)-f_{\mu}^{\ast}(s)\right)
\frac{cap_{1}(s,1/2)}{s}\right]  ^{q}ds  &  =\int_{0}^{t}\left[
\left( \frac{1}{s}\int_{0}^{s}z\left(  -f_{\mu}^{\ast}\right)
^{\prime}(z)dz\right)
\frac{cap_{1}(s,1/2)}{s}\right]  ^{q}ds\\
&  \leq c\int_{0}^{t}\left[  \left(  \left(  -f_{\mu}^{\ast}\right)
^{\prime
}(s)\right)  cap_{1}(s,1/2)\right]  ^{q}ds\text{ \ \ \ \ \ (by (\ref{muk})}\\
&  =c\int_{0}^{t}\left[  \left(  \left(  -f_{\mu}^{\ast}\right)
^{\prime }(s)\right)  \inf_{s\leq z<1/2}I(z)\right]  ^{q}ds\text{ \
\ \ \ \ (by
(\ref{mazya2}))}\\
&  \leq c\int_{0}^{t}\left[  \left(  \left(  -f_{\mu}^{\ast}\right)
^{\prime
}(s)\right)  I(s)\right]  ^{q}ds\\
&  \leq c\int_{0}^{t}\left(  \left|  \nabla f\right|
_{\mu}^{\ast}\right)
^{q}(s)ds\text{ \ \ \ \ (by (\ref{aa}). }%
\end{split}\end{equation*}
Now, using a familiar limiting argument we can avoid the restriction
(\ref{condition2}) and still achieve (\ref{fi}).

Applying the operator $Q_{1/2}f(s)=$
$\int_{s}^{1/2}f(z)\frac{dz}{z}$ to the inequality (\ref{fii}), and
then combining with the fact that for $0<t<1/2$ we have (see
\cite{bmrlibro})
\begin{align}
P(Q_{1/2}f)(s)-2\int_{0}^{1/2}f  &  =Q_{1/2}(Pf)(s),\text{ }\nonumber\\
Pf(s)+Q_{1/2}f(s)  &  =P(Q_{1/2}f)(s), \label{propi}%
\end{align}
we obtain
\begin{equation*}\begin{split}
&  \frac{1}{t}\int_{0}^{t}Q_{1/2}\left[  \left(  f_{\mu}^{\ast\ast}%
(\cdot)-f_{\mu}^{\ast}(\cdot)\right)
\frac{cap(\cdot,1/2)}{(\cdot)}\right]
^{q}\,(s)ds-2\int_{0}^{1/2}\left[  \left(  f_{\mu}^{\ast\ast}(s)-f_{\mu}%
^{\ast}(s)\right)  \frac{cap(\cdot,1/2)}{s}\right]  ^{q}ds\\
&  \leq c\left(  \frac{1}{t}\int_{0}^{t}Q_{1/2}\left(  \left(
\left|  \nabla f\right|  _{\mu}^{\ast}\right)  ^{q}(s)\right)
\,ds-2\int_{0}^{1/2}\left(
\left|  \nabla f\right|  _{\mu}^{\ast}\right)  ^{q}(s)\right) \\
&  \leq\frac{c}{t}\int_{0}^{t}Q_{1/2}\left(  \left(  \left|  \nabla
f\right| _{\mu}^{\ast}\right)  ^{q}(s)\right)  \,ds.
\end{split}\end{equation*}
Moreover, since
\begin{equation*}\begin{split}
\frac{1}{1/2}\int_{0}^{1/2}\left[  \left(  f_{\mu}^{\ast\ast}(s)-f_{\mu}%
^{\ast}(s)\right)  \frac{cap_{1}(s,1/2)}{s}\right]  ^{q}ds  &  \leq\frac{1}%
{t}\int_{0}^{t}\left[  \left(
f_{\mu}^{\ast\ast}(s)-f_{\mu}^{\ast}(s)\right)
\frac{cap_{1/2}(s,1/2)}{s}\right]  ^{q}ds\\
&  \leq\frac{c}{t}\int_{0}^{t}\left(  \left|  \nabla f\right|
_{\mu}^{\ast
}\right)  ^{q}(s)ds\text{ \ \ (by (\ref{fii}))}\\
&  \leq\frac{c}{t}\int_{0}^{t}Q_{1/2}\left(  \left(  \left|  \nabla
f\right|
_{\mu}^{\ast}\right)  ^{q}(s)\right)  \,ds\text{ \ \ (by (\ref{propi})),}%
\end{split}\end{equation*}
we obtain
\[
\int_{0}^{t}Q_{1/2}\left[  \left(
f_{\mu}^{\ast\ast}(\cdot)-f_{\mu}^{\ast
}(\cdot)\right)  \frac{cap_{1}(\cdot,1/2)}{(\cdot)}\right]  ^{q}%
(s)\,ds\leq2c\int_{0}^{t}Q_{1/2}\left(  \left(  \left|  \nabla
f\right| _{\mu}^{\ast}\right)  ^{q}\right)  (s)ds.
\]
Observe that $Q_{1/2}h(s)$ is decreasing. Consequently,
\begin{equation*}\begin{split}
\int_{0}^{t}Q_{1/2}\left[  \left(
f_{\mu}^{\ast\ast}(\cdot)-f_{\mu}^{\ast }(\cdot)\right)
\frac{cap_{1}(\cdot,1/2)}{(\cdot)}\right]  ^{q}(s)ds  &
=\int_{0}^{t}\left(  Q_{1/2}\left[  \left(
f_{\mu}^{\ast\ast}(\cdot)-f_{\mu }^{\ast}(\cdot)\right)
\frac{cap_{1}(\cdot,1/2)}{(\cdot)}\right]
^{q}\right)  ^{\ast}(s)ds\\
&  \leq2c\int_{0}^{t}Q_{1/2}\left(  \left(  \left|  \nabla f\right|
_{\mu }^{\ast}\right)  ^{q}\right)  (s)ds.
\end{split}\end{equation*}
We may now apply (\ref{hardy})
\[
\left\|  Q_{1/2}\left[  \left(  f_{\mu}^{\ast\ast}(\cdot)-f_{\mu}^{\ast}%
(\cdot)\right)  \frac{cap_{1}(\cdot,1/2)}{(\cdot)}\right]
^{q}(t)\right\| _{X}\leq C\,\left\|  Q_{1/2}\left(  \left(  \left|
\nabla f\right|  _{\mu }^{\ast}\right)  ^{q}\right)  (t)\right\|
_{X}.
\]
Whence, if $\underline{\alpha}_{X}>0,$
\[
\left\|  Q_{1/2}\left[  \left(  f_{\mu}^{\ast\ast}(\cdot)-f_{\mu}^{\ast}%
(\cdot)\right)  \frac{cap_{1}(\cdot,1/2)}{(\cdot)}\right]
^{q}(t)\right\| _{\bar{X}}\leq C\,\left\|  \left|  \nabla f\right|
^{q}\right\|  _{X}.
\]
Since both, $t\rightarrow t\left(
f_{\mu}^{\ast\ast}(t)-f_{\mu}^{\ast }(t)\right)  $ and $t\rightarrow
cap_{1}(t,1/2)$ are increasing, we see that
for $0<t<1/4$%
\begin{equation*}\begin{split}
&  \int_{t}^{1/2}\left[  \left(  f_{\mu}^{\ast\ast}(s)-f_{\mu}^{\ast
}(s)\right)  \frac{cap_{1}(s,1/2)}{s}\right]  ^{q}\frac{ds}{s}\\
&  \geq\left(  t\left(
f_{\mu}^{\ast\ast}(t)-f_{\mu}^{\ast}(t)\right) cap_{1}(t,1/2)\right)
^{q}\int_{t}^{1/2}\left[  \frac{1}{s^{2}}\right]
^{q}\frac{ds}{s}\\
&  \succeq\left(  \left(
f_{\mu}^{\ast\ast}(t)-f_{\mu}^{\ast}(t)\right)
\frac{cap_{1}(t,1/2)}{t}\right)  ^{q}.
\end{split}\end{equation*}
Using the elementary estimation
\[
\left\|  h\right\|  _{\bar{X}}\preceq\left\|
h\chi_{(0,1/4)}\right\| _{\bar{X}},
\]
we find
\begin{equation*}\begin{split}
\left\|  \left(  \lbrack
f_{\mu}^{\ast\ast}(t)-f_{\mu}^{\ast}(t)]\frac
{cap_{1}(t,1/2)}{t}\right)  ^{q}\right\|  _{\bar{X}}  &
\preceq\left\|
\left(  \lbrack f_{\mu}^{\ast\ast}(t)-f_{\mu}^{\ast}(t)]\frac{cap_{1}%
(t,1/2)}{t}\right)  ^{q}\chi_{(0,1/4)}(t)\right\|  _{\bar{X}}\\
&  \preceq\left\|  Q_{1/2}\left[  \left(
f_{\mu}^{\ast\ast}(\cdot)-f_{\mu
}^{\ast}(\cdot)\right)  \frac{cap_{1}(\cdot,1/2)}{(\cdot)}\right]  ^{q}%
(t)\chi_{(0,1/4)}(t)\right\|  _{\bar{X}}\\
&  \preceq\left\|  \left(  \left|  \nabla f\right|
_{\mu}^{\ast}\right) ^{q}\right\|  _{X}.
\end{split}\end{equation*}

Finally, to see (\ref{porfin}), we proceed as in theorem
\ref{capa02},
\[
f_{\mu}^{\ast}(t)^{q}\leq f_{\mu}^{\ast\ast}(t)^{q}=\left(  \int_{t}%
^{1/2}[f_{\mu}^{\ast\ast}(t)-f_{\mu}^{\ast}(t)]\frac{cap_{1}(s,1/2)}{s}%
\frac{ds}{cap_{1}(s,1/2)}+f_{\mu}^{\ast\ast}(1/2)\right)  ^{q}.
\]
Consequently,
\begin{equation*}\begin{split}
\left\|  f_{\mu}^{\ast}\right\|  _{Y^{(q)}}  &  =\left\|  \left(
f_{\mu
}^{\ast\ast}\right)  ^{q}\right\|  _{\bar{Y}}^{1/q}\\
&  \leq\left\|  \left(
\int_{t}^{1/2}[f_{\mu}^{\ast\ast}(t)-f_{\mu}^{\ast
}(t)]\frac{cap_{1}(s,1/2)}{s}\frac{ds}{cap_{1}(s,1/2)}+f_{\mu}^{\ast\ast
}(1/2)\right)  ^{q}\right\|  _{\bar{Y}}^{1/q}\\
&  \preceq\left\|  \left(
f_{\mu}^{\ast\ast}(s)-f_{\mu}^{\ast}(s)\right)
\frac{cap_{1}(s,1/2)}{s}\right\|  _{\bar{X}^{(q)}}+\left\|
f\right\|
_{L^{1}}\\
&  \preceq\left\|  \left(  \left|  \nabla f\right|
_{\mu}^{\ast}\right) ^{q}\right\|  _{\bar{X}}^{1/q}+\left\|
f\right\|  _{L^{1}}\text{ \ \ \ \ (by
(\ref{fi})).}%
\end{split}\end{equation*}
\end{proof}

\begin{remark}
Observe that for $0<t<1/2,$%
\[
\frac{cap_{1}(t,1/2)}{t}=\frac{\inf_{t\leq z<1/2}I(z)}{t}=\inf_{0<s<t}%
\frac{\inf_{t\leq
s<1/2}I(s)}{s}\leq\inf_{0<s<t}\frac{I(s)}{s}=w_{1}(t).
\]
Thus
\[
\left\|  \lbrack g_{\mu}^{\ast\ast}(t)-g_{\mu}^{\ast}(t)]\frac{cap_{1}%
(t,1/2)}{t}\right\|  _{\bar{X}^{(q)}}\preceq\left\|  \lbrack
g_{\mu}^{\ast \ast}(t)-g_{\mu}^{\ast}(t)]w_{1}(t)\right\|
_{\bar{X}^{(q)}}.
\]
\end{remark}

\begin{remark}
As in Theorem \ref{capa02}, the extra $L^{1}-$terms appearing in
(\ref{fi}) and (\ref{porfin}) can be omitted if Cheeger's inequality
holds. Notice that Cheeger's inequality is equivalent to (cf.
\cite{MiE} and the references therein)
\[
cap_{1}(t,1/2)\geq ct\text{ \ \ (}0<t\leq1/2),
\]
which in turn is equivalent to (cf. \cite{mmadv})
\[
\left\|  Q_{cap_{1}}f\right\|  _{L^{1}}\leq C\left\|  f\right\|
_{L^{1}},
\]
for all positive functions $f\in L^{1},$ with supp$f\subset(0,1/2).$
\end{remark}

\section{A connection with martingale inequalities via interpolation theory
and optimization}

It was shown recently in \cite{cjm} that using interpolation theory
one can relate the rearrangement inequalities for Sobolev functions
we have obtained in our work with the extrapolation theory of
martingale inequalities of Burkholder and Gundy \cite{bg} and Herz
\cite{he}. There are two key observations underlying these
developments: (i) the idea to treat truncations as part of the more
general process of decomposing elements. Here the appropriate
setting is the real method of interpolation, where decompositions
are selected using penalty methods. From this point of view our
method is related to the fact that certain optimal splittings in
interpolation theory are given by truncations; (ii) the fact that
gradients (and other related operations in analysis, e.g. the
martingale square functions!) commute, in suitable quantified
manners, with respect to these splittings.

We thought it would be worthwhile to present these ideas to this
community using a presentation that goes directly to the heart of
the matter. Thus we will focus our discussion here on Sobolev
inequalities and refer the reader to \cite{cjm} for complete proofs
and other developments. This topic will also be discussed in
\cite{mamifuture}. One reason we were originally interested in
placing our results in a larger context is that it may help to
suggest a suitable substitute for the truncation method when we deal
with higher order differential operators where truncations are
obviously inadequate.

We start by placing the truncation method within the larger context
of interpolation theory. The basic modern ingredient of *real
interpolation* is the study of controlled splittings of elements
using ``penalty'' methods (``Peetre's $K-$functional). The point of
departure of this theory are pairs $\vec{X}=(X_{0},X_{1})$ of Banach
spaces that are ``compatible'' in the sense that both spaces
$X_{i},i=0,1,$ are continuously embedded in a common Hausdorff
topological vector space\footnote{For example, this will happen if
$X_{1}\subset X_{0},$ with a continuous embedding.}. For such pairs
the sum space $\Sigma(\vec{X})=X_{0}+X_{1},$ makes sense and for
$t>0$ we can consider the parametrized family of penalty problems
given by
\begin{equation}
K(t,x;\vec{X})=\inf\left\{  \left\|  x_{0}\right\|
_{X_{0}}+t\left\| x_{1}\right\|  _{X_{1}}:x=x_{0}+x_{1},x_{i}\in
X_{i},i=0,1\right\}  .
\label{k1}%
\end{equation}
To see the effect of the penalty $t$ let us suppose, for example,
that $X_{1}\subset X_{0},$ with $\left\|  x\right\|
_{X_{0}}\leq\left\| x\right\|  _{X_{1}}.$ If $t$ is ``very large'',
say $t>1,$ then for every $x\in X_{0}$ the spliting $x=x+0$ ``wins''
and we see that $K(t,x;\vec {X})=\left\|  x\right\|  _{X_{0}},$
while on the other hand, if $x\in X_{1},$ we see that
$\lim_{t\rightarrow0}\frac{K(t,x;\vec{X})}{t}=\left\|  x\right\|
_{X_{1}}.$ Intermediate spaces $\vec{X}_{\theta,q}$ are constructed
by specifying suitable control on the decay of $K(t,x;\vec{X}).$ A
typical construction (Lions-Peetre) can be described as follows: for
$\theta \in(0,1),1\leq q\leq\infty,$ let
$\vec{X}_{\theta,q}=\{x\in\Sigma(\vec
{X}):\left\|  x\right\|  _{\vec{X}_{\theta,q}}<\infty\},$ with%
\[
\left\|  x\right\|  _{\vec{X}_{\theta,q}}=\left\{
\begin{array}
[c]{cc}%
\left\{
\int_{0}^{\infty}(t^{-\theta}K(t,x;\vec{X}))^{q}\frac{dt}{t}\right\}
^{1/q} & \text{if }q<\infty,\\
\sup_{t>0}\left\{  t^{-\theta}K(t,x;\vec{X})\right\}  & q=\infty
\end{array}
\right.  .
\]
Following \cite{jm87} we see that if we write
\[
x=D_{0}(t)x+D_{1}(t)x,\text{ }D_{i}(t)x\in X_{i},
\]
for an optimal decomposition of $x$ for the calculation of (\ref{k1}), then%
\[
K(t,x;\vec{X})=\left\|  D_{0}(t)x\right\|  _{X_{0}}+t\left\|  D_{1}%
(t)x\right\|  _{X_{1}},
\]
and with suitable interpretation for the derivatives
\begin{equation}
\left\|  D_{0}(t)x\right\|
_{X_{0}}=K(t,x)-t\frac{d}{dt}K(t,x;\vec{X})\text{
}a.e.; \label{ai1}%
\end{equation}%
\begin{equation}
\left\|  D_{1}(t)x\right\|
_{X_{1}}=\frac{d}{dt}K(t,x;\vec{X})\text{ }a.e.
\label{ai2}%
\end{equation}

The pair $(L^{1}(\mathbb{R}^{n}),L^{\infty}(\mathbb{R}^{n}))$ is
well understood (cf. \cite{bs}). Without loss of generality we can
assume that $f=\left|  f\right|  .$ An optimal decomposition is then
given by
\[
D_{0}(t)\left|  f\right|  =f_{f^{\ast}(t)},\text{ }D_{1}(t)\left|
f\right| =(\left|  f\right|  -f_{f^{\ast}(t)}),
\]
where for $t>0,$%
\[
f_{f^{\ast}(t)}=\left\{
\begin{array}
[c]{ll}%
\left|  f(x)\right|  -f^{\ast}(t) & \text{if }f^{\ast}(t)<\left|
f(x)\right|
,\\
0 & \text{if }\left|  f(x)\right|  \leq f^{\ast}(t).
\end{array}
\right.  \text{ }%
\]
Therefore the following elementary formulae holds (cf. \cite{bs})
\[
K(t,f;L^{1}(\mathbb{R}^{n}),L^{\infty}(\mathbb{R}^{n}))=\int_{0}^{t}f^{\ast
}(s)ds=tf^{\ast\ast}(t),
\]%
\[
\frac{d}{dt}(K(t,f;L^{1}(\mathbb{R}^{n}),L^{\infty}(\mathbb{R}^{n})))=f^{\ast
}(t).
\]
In this case (\ref{ai1}) and (\ref{ai2}) take the form%
\[
\left\|  D_{0}(t)f\right\|
_{L^{1}}=tf^{\ast\ast}(t)-tf^{\ast}(t),\text{ }t\left\|
D_{1}(t)f\right\|  _{L^{\infty}}=tf^{\ast}(t).
\]
The interaction of gradients and truncation can be quantified here by%
\begin{align}
\left\|  \nabla\left(  f_{f^{\ast}(t)}\right)  \right\|  _{L^{1}}  &
=\int_{\{\left|  f\right|  >f^{\ast}(t)\}}\left|  \nabla\left|
f\right|
\right|  dx\nonumber\\
&  \leq\int_{0}^{t}(\nabla\left|  f\right|  )^{\ast}(s)ds\label{necesita}\\
&  =K(t,\left|  \nabla\left|  f\right|  \right|
;L^{1},L^{\infty}).\nonumber
\end{align}

Using the optimality and the definition of the penalty method
(\ref{k1}) we
readily get the following *reiteration* estimate (cf. \cite{cjm})%
\begin{equation}
K(s,D_{0}(t)f,\vec{X})\geq K(s,f)-s\frac{d}{ds}K(s,f),\quad s\leq
t\quad\text{(a.e.)}. \label{bjc}%
\end{equation}
As a consequence we find%

\begin{equation}
\left\|  D_{0,\vec{X}}(t)f\right\|  _{\vec{X}_{\theta,q}}\geq\int_{0}%
^{t}\left[  \left(
K(s,f;\vec{X})-s\frac{d}{ds}K(s,f;\vec{X})\right)
s^{-\theta}\right]  ^{q}\frac{ds}{s},\text{ }q<\infty, \label{dona1}%
\end{equation}
and%
\begin{equation}
\left\|  D_{0,\vec{X}}(t)f\right\|  _{\vec{X}_{\theta,\infty}}\geq
t^{-\theta
}[K(t,f;\vec{X})-t\frac{d}{dt}K(t,f;\vec{X})]. \label{dona2}%
\end{equation}

Let us see this method in action. We start by rewriting the weak
Gagliardo-Nirenberg inequality using
\begin{equation*}\begin{split}
\left\|  f\right\|  _{M(L^{n^{\prime}})}  &
=\sup_{t>0}\{f^{\ast\ast
}(t)t^{1/n^{\prime}}\}\\
&  =\left\|  f\right\|
_{(L^{1}(R^{n}),L^{\infty}(R^{n}))_{1/n,\infty}}.
\end{split}\end{equation*}
Therefore, for $f\in Lip_{0},$ we have
\[
\left\|  f\right\|  _{(L^{1}(R^{n}),L^{\infty}(R^{n}))_{1/n,\infty}}%
\preceq\left\|  \nabla f\right\|  _{1}.
\]
Inserting the optimal decomposition we get
\begin{equation}
\left\|  D_{0}(t)f\right\|  _{(L^{1}(R^{n}),L^{\infty}(R^{n}))_{1/n,\infty}%
}\preceq\left\|  \nabla D_{0}(t)f\right\|  _{1}. \label{ai4}%
\end{equation}
The left hand side of (\ref{ai4}) can be estimated using
(\ref{dona2}) and we
see that%
\[
\left\|  D_{0}(t)f\right\|  _{(L^{1}(R^{n}),L^{\infty}(R^{n}))_{1/n,\infty}%
}\geq t^{-1/n}[tf^{\ast\ast}(t)-tf^{\ast}(t)].
\]
To estimate the right hand side of (\ref{ai4}) we use
(\ref{necesita}) and we
find%
\[
\left\|  \nabla D_{0}(t)f\right\|  _{1}\leq t\left|  \nabla f\right|
^{\ast\ast}(t).
\]
Altogether we have the familiar%
\[
t^{-1/n}[f^{\ast\ast}(t)-f^{\ast}(t)]\preceq\left|  \nabla f\right|
^{\ast\ast}(t).
\]
If we start with the strong form of the Gagliardo-Nirenberg
inequality, which
in terms of interpolation norms we rewrite as,%
\[
\left\|  f\right\|
_{(L^{1}(R^{n}),L^{\infty}(R^{n}))_{1/n,1}}\preceq\left\| \nabla
f\right\|  _{1},
\]
we can then proceed as above. The only change is that the lower
estimate is
now obtained using (\ref{dona1}) and we find%
\[
\int_{0}^{t}[f^{\ast\ast}(s)-f^{\ast}(s)]s^{-1/n}ds\preceq t\left|
\nabla f\right|  ^{\ast\ast}(t);
\]
a result first derived in \cite{mmp}.

The corresponding inequalities for Log Sobolev inequalities can be
obtained in
analogous manner (cf. \cite{cjm}) but using as starting point%

\begin{equation}
\left\|  f\right\|  _{L(LogL)^{1/2}(R^{n},\gamma_{n})}\preceq\left\|
\nabla
f\right\|  _{L^{1}(R^{n},\gamma_{n})}. \label{auto3}%
\end{equation}

Finally for the connection with the theory of Burkholder-Gundy
\cite{bg} note
that the commutation of the gradient with truncations%
\[
\left|  \nabla f_{t_{1}}^{t_{2}}\right|  =\left|  \nabla f\right|
\chi_{\{t_{1}<\left|  f\right|  <t_{2}\}}%
\]
has the following analog in terms of Square martingale operators,%

\begin{equation}
S(^{\nu}f^{\tau})\leq I(\nu<\tau)S(f), \label{suma1}%
\end{equation}
where $\nu,\tau$ are stopping times, and where $I$ stands for
indicator function. This can be implemented to show an analogue of
(\ref{necesita}).
Consider now the known inequality%
\[
\left\|  Mf\right\|  _{1}\preceq\left\|  Sf\right\|  _{1},
\]
where $M$ is the maximal martingale operator. The method above then
gives the
following inequality due to Herz \cite{he}%
\[
(Mf)^{\ast\ast}(t)-(Mf)^{\ast}(t)\preceq\left(  Sf\right)
^{\ast\ast}(t).
\]
We refer to \cite{cjm} for more details.

\textbf{Acknowledgement} We are very grateful to the referee for
comments and corrections that allowed us to improve the paper.

\bibliographystyle{amsalpha}

\end{document}